\numberwithin{equation}{subsection} 
\newtheorem{theorem}[equation]{Theorem} 
\newtheorem{lemma}[equation]{Lemma}
\newtheorem{corollary}[equation]{Corollary}
\newtheorem{proposition}[equation]{Proposition}
\theoremstyle{definition} 
\newtheorem{definition}[equation]{Definition}
\newtheorem{construction}[equation]{Construction}
\newtheorem{example}[equation]{Example}
\newtheorem{remark}[equation]{Remark}
\newtheorem{notation}[equation]{Notation}
\renewcommand{\in}{\smallin}
\newcommand{\categ}{\mathsf}
\newcommand{\Rings}{\categ{Rings}}
\newcommand{\E}[1]{\mathbb{E}_{#1}}
\DeclareMathOperator{\st}{st}
\newcommand{\Prl}{\categ{Pr^L}}
\newcommand{\V}{\mathcal{V}}
\newcommand{\PrlX}[1]{\categ{Pr}^L_{#1}}
\newcommand{\PrlV}{\PrlX{\V}}
\newcommand{\Prlst}{\categ{Pr}^{L, \st}}
\newcommand{\PrlstX}[1]{\categ{Pr}^{L,\st}_{#1}}
\newcommand{\PrlstV}{\PrlstX{\V}}
\DeclareMathOperator{\St}{St}
\DeclareMathOperator{\CoSt}{CoSt}
\DeclareMathOperator{\Ab}{Ab}
\DeclareMathOperator{\Indecomp}{Idcmp}
\DeclareMathOperator{\Prim}{Prim}
\DeclareMathOperator{\Day}{Day}
\DeclareMathOperator{\LMod}{LMod}
\DeclareMathOperator{\LComod}{LComod}
\DeclareMathOperator{\Comod}{Comod}
\newcommand{\bignPrlk}[1]{\categ{Pr^{L, {#1} \kern 1pt \wedge}_k}}
\DeclareMathOperator{\nonunit}{nu}
\DeclareMathOperator{\SymMonCat}{SMonCat}
\DeclareMathOperator{\Exc}{Exc}
\DeclareMathOperator{\Fun}{Fun}
\DeclareMathOperator{\op}{op}
\DeclareMathOperator{\Mod}{Mod}
\DeclareMathOperator{\Spc}{\categ{Spc}}
\DeclareMathOperator{\Cat}{\categ{Cat}}
\DeclareMathOperator*{\colim}{colim}
\DeclareMathOperator{\Hom}{\mathcal{H}\kern -2pt om}
\DeclareMathOperator{\Alg}{Alg}
\DeclareMathOperator{\Coalg}{Coalg}
\DeclareMathOperator{\Op}{Op}
\DeclareMathOperator{\CoOp}{CoOp}
\DeclareMathOperator{\aug}{aug}
\DeclareMathOperator{\End}{End}
\newcommand{\eqTriangle}{{\Delta^2_{=}}}
\newcommand{\A}{\categ{A}}
\newcommand{\C}{\categ{C}}
\newcommand{\D}{\categ{D}}
\newcommand{\M}{\categ{M}}
\DeclareMathOperator{\LM}{LM}
\DeclareMathOperator{\Assoc}{Assoc}
\DeclareMathOperator{\Comm}{Comm}
\DeclareMathOperator{\Sseq}{Sseq}
\DeclareMathOperator{\Triv}{Triv}
\DeclareMathOperator{\CAlg}{CAlg}
\DeclareMathOperator{\Free}{Free}
\DeclareMathOperator{\Cofree}{Cofr}
\DeclareMathOperator{\Tot}{Tot}
\DeclareMathOperator{\Ind}{Ind}
\DeclareMathOperator{\Set}{\mathbf{Set}}
\DeclareMathOperator{\res}{res}
\DeclareMathOperator{\cofib}{cofib}
\DeclareMathOperator{\Lin}{Lin}
\DeclareMathOperator{\Mult}{Mult}
\DeclareMathOperator{\fib}{Fib}
\DeclareMathOperator{\Fin}{Fin}
\DeclareMathOperator{\bij}{bij}
\DeclareMathOperator{\Md}{\mathcal{M}}
\newcommand{\freesym}[1]{\Fin^{\bij}_{#1}}
\newcommand{\freesymtrunc}[2]{\Fin^{\bij}_{#1, \leq {#2}}}
\newcommand{\freesymcoprod}[1]{\Fin^{\bij, \amalg}_{#1}}
\title{Costability of comodules}
\author{Fei Yu Chen}
\date{April 10, 2024}
\begin{document}

\maketitle

\begin{abstract}
    Given a ring $R$, we have a classical result stating that the 
ordinary category of modules $\Mod_R$ is the abelianization of the category of $R$-algebras $\Alg_R$. Using the framework of infinity categories and higher algebra, Francis has shown that the infinity category of $R$-modules is the {\em stabilization} of the infinity category of $R$-algebras \cite{FrancisTangent}. In this work we show a dual result for coalgebras over a cooperad $Q$: namely that given a coalgebra $C$, comodules over $C$ are the costabilization of coalgebras over $C$, which is the universal stable category with a finite colimit preserving functor to coalgebras over $C$.
\end{abstract}

\tableofcontents

\section{Introduction}
Operads have held a strong spot in algebraic theories, especially in higher category theory and homotopical mathematics, for example see \cite{HA, FrancisTangent, Amabel, Hinich:operad, LV}.
There are many ways to connect topological/geometric problems to algebraic ones by taking functions, or if one is more homotopically minded, cohomology cochains. Operads were introduced by May as an way of encoding algebraic laws into a single algebraic gadget \cite{May}. He began with the study of iterated loop spaces and their algebraic structure of multiple composition structures (ie composing along the various dimensions of the $n$-fold loop spaces) and created the $\E{n}$ operad along with $\E{n}$-algebras. 

The dual story of coalgebras is equally interesting. Coalgebras are used in computer science as a way of modeling data structures and computation states, for example see \cite{Jacobs2016IntroductionTC}. They also model topological or geometric data more naturally, as for example every topological space $X$ is a coalgebra for the natural cartesian product functor. Furthermore, even if one only cares about algebras, coalgebraic techniques arise for constructing homotopical resolutions via a bar/cobar duality \cite{LV}. Further, they are used in studying descent data in algebraic geometry, typically arising from a map of schemes $f: X \to Y$ and the induced adjunction pair $f^\ast\dashv f_\ast$. 

In this paper we aim to dualize a well known result about modules over an algebra. Given an ordinary ring $R$ and its $1$-category of algebras $\Alg_R$, we have the following statement
\[
\Mod_R \simeq \Ab(\Alg_R)
\]
where $\Ab(\Alg_R)$ denotes the subcategory of abelian group objects in $\Alg_R$, also called the abelianization of $\Alg_R$. 

This has an infinity categorical version where one uses the notion of stabilization (for example see \cite[Chapter~1]{HA}) instead of abelianization. Suppose we're given an unital operad $P$ and a presentably stable infinity category $\C$. The stabilization is defined as follows:
\begin{definition}[Stabilization]
    Given an infinity category $\C$ with finite limits. Then the stabilization $\St(\C)$ is the universal stable category with a finite limit preserving functor to $\C$. In other words, given any stable infinity category $\D$ and finite limit preserving functor $f: \D \to \C$, we have a universal factorization:
    \[
    \begin{tikzcd}
        \D \ar[d, dotted] \ar[r, "f"] & \C \\
        \St(\C). \ar[ru, "\Omega^{\infty}"']
    \end{tikzcd}
    \]
\end{definition}

Then given a $P$-algebra $R \in \Alg_P(\C)$, Francis proved that
\[
\St (\Alg_{P}(\C)^{/R}) \simeq \Mod^P_R(\C)
\]
where $\Mod^P_R(\C)$ denotes the infinity category of $P$-operadic modules over $R$. Specializing to the case of $P = \Comm$ the commutative operad or $P = \Assoc$ the associative operad gives us the usual result that the stabilization of commutative (resp. associative) algebras over $R$ is equivalent to commutative (resp. associative) modules over $R$. Notice that an ``associative module'' here refers to a bimodule.

In this paper, we prove the dual statement that comodules are the costabilization (see \cite[Section~5]{FrancisThesis}) of coalgebras. Costabilization can be characterized as follows:
\begin{definition}[Costabilization]
    Given an infinity category $\C$ with finite colimits. Then the costabilization $\CoSt(\C)$ is the universal stable infinity category with a finite colimit preserving functor to $\C$. In other words, given any stable infinity category $\D$ and finite colimit preserving functor $f: \D \to \C$, we have a universal factorization:
    \[
    \begin{tikzcd}
        \D \ar[d, dotted] \ar[r, "f"] & \C \\
        \CoSt(\C). \ar[ru, "\Sigma_{\infty}"']
    \end{tikzcd}
    \]
\end{definition} Compare with stabilization where the condition is on finite {\em limit} preserving functors from stable categories instead of finite {\em colimit} preserving ones.

Now a key problem in trying to dualize directly by just taking opposites is the following: the opposite of a presentable infinity category is no longer presentable. Relatedly, the tensor product of a presentably symmetric monoidal infinity category $\C$ only commutes with colimits and not limits. This prevents many techniques for algebras from working with coalgebras; namely one has to be much more careful when taking cofree/cobar resolutions. Given an operad $P$ and a $P$-algebra $A$, the free/bar resolution
\[\begin{tikzcd}
	A & {P \circ A} & {P \circ P \circ A} & \dots
	\arrow[from=1-2, to=1-1]
	\arrow[shift right, from=1-3, to=1-2]
	\arrow[shift left, from=1-3, to=1-2]
\end{tikzcd}\]
is a simplicial resolution, thus a  colimit, thus commutes nicely with the tensor. With coalgebras, the cobar resolution is a totalization, so a limit, hence it doesn't work well with the tensor product.

Another problem with the coalgebra case is if one tries to follow the Francis' proof \cite{FrancisTangent}, he requires many techniques that rely on theories of Boardman-Vogt tensor of operads and on Goodwillie derivatives of split analytic functors. The definition of Goodwillie derivatives unfortunately gives adjunctions ``in the wrong way'', which give monads and algebras instead of comonads, which make it difficult to apply comonadicity theorems. The universal property is as follows (see \cite{Goodwillie, HA}): given infinity categories $\C, \D$ where $\C$ has finite colimits and $\D$ has finite limits, we have an inclusion 
\[
\Exc(\C, \D) \to \Fun(\C, \D)
\]
where excisive functors send pushouts to pullbacks. The Goodwillie derivative $DF: \C\to \D$ of a functor $F: \C \to \D$ is a local left adjoint to this inclusion. In other words, natural transformations $F \to G$ where $G$ is excisive factors through $F \to DF \to G$ where $F \to DF$ is the unit morphism. Now notice that on $\Fun(\C, \D)$, we have a monad instead of a comonad. This is however likely rectifiable, as one can define Goodwillie ``co-derivatives'' with a dual universal property, though one must set up the theory.

A deeper problem is the Boardman-Vogt tensor product. This is a special tensor operation on topological operads (or set-based operads), which relies on the fact that topological spaces have a diagonal morphism $X \to X \times X$. For example, even $1$-categorically for set based operads, the Boardman-Vogt tensor operation requires the iterated diagonal morphism to copy the same operation many times, see for example \cite[Section~5.1]{MoerdijkWeiss}. We thus cannot dualize this at least a priori, since spaces do not have a codiagonal map $X \times X \to X$.

We avert the use of these techniques by using an alternative argument. We work with conilpotent coalgebras with divided powers, following \cite[Chapter~4]{DAG}.

Our main theorem is the following:
\begin{theorem}\label{theorem:main}
    Fix a characteristic zero field $k$.
    Given a presentably stable $k$-linear category $\V$ and a conilpotent cooperad $Q$, as well as a coalgebra $C \in \Coalg_Q(\V)$. Then we have an equivalence
    \[
    \CoSt(\Coalg_Q^{C/}(\V)) \simeq \Comod_C^Q(\V).
    \]
\end{theorem}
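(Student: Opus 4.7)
My plan is to adapt the Francis strategy to the coalgebra side by working exclusively within the conilpotent divided-power framework of \cite[Chapter~4]{DAG}, so that the relevant coalgebraic operations are assembled from direct sums rather than products of tensor powers and therefore remain compatible with finite colimits. The first step is to build a natural comparison functor: given $N \in \Comod_C^Q(\V)$, the object $C \oplus N$ of $\V$ inherits a $Q$-coalgebra structure whose cobrackets combine the cobrackets of $C$ with the $C$-coaction on $N$, and for which the summand inclusion $C \hookrightarrow C \oplus N$ is a $Q$-coalgebra map under $C$. This defines
\[
  \alpha \colon \Comod_C^Q(\V) \longrightarrow \Coalg_Q^{C/}(\V), \qquad N \mapsto (C \to C \oplus N).
\]
Because $\V$ is presentably stable and $k$-linear in characteristic zero and $Q$ is conilpotent with divided powers, each cooperadic operation feeding into this construction is built from direct sums and hence preserves finite colimits in $N$; so $\alpha$ is finite colimit preserving. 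Stability of $\Comod_C^Q(\V)$ then triggers the universal property of $\CoSt$, producing a canonical factorisation $\tilde\alpha \colon \Comod_C^Q(\V) \to \CoSt(\Coalg_Q^{C/}(\V))$ with $\Sigma_{\infty} \circ \tilde\alpha \simeq \alpha$, and it remains to show $\tilde\alpha$ is an equivalence.

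Next I would build a candidate inverse $\beta$ by extracting \emph{primitives above $C$}. Given $X \in \CoSt(\Coalg_Q^{C/}(\V))$, set $\beta(X) := \cofib(C \to \Sigma_{\infty} X)$ in $\V$, and use the divided-power conilpotent filtration of $\Sigma_{\infty} X$ by powers of its coaugmentation ideal to endow this cofibre with a $Q$-coaction of $C$: in characteristic zero the associated graded splits canonically into divided $Q$-cooperations applied to the primitives, so the cobracket of $\Sigma_{\infty} X$ descends coherently to $\beta(X)$. The identity $\beta \circ \tilde\alpha \simeq \id$ is immediate because the primitives of a trivial coextension $C \oplus N$ recover $N$ with its coaction; the converse $\tilde\alpha \circ \beta \simeq \id$ reduces to proving that any object of $\Coalg_Q^{C/}(\V)$ arising from a stable source along a finite-colimit preserving functor must split as a trivial coextension, which is the coalgebraic dual of Francis' observation that infinitesimal $R$-algebras are square-zero extensions by modules.

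The principal difficulty is making this splitting rigorous as an equivalence of infinity categories rather than a mere object-wise statement: one must verify that the divided-power filtration, the decomposition of its associated graded, and the induced coaction on the cofibre are all coherent and natural in $X$, and that finite-colimit preservation together with stability of the source kills every potential extension class among the associated graded pieces. This is exactly where the characteristic-zero hypothesis and the divided-power machinery do the essential work, since without them the symmetric-group obstructions would force one back into the Boardman--Vogt and Goodwillie-derivative techniques the paper is designed to avoid.
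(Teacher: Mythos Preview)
Your overall strategy matches the paper's: build the square-zero extension functor $N \mapsto (C \to C\oplus N)$, factor through costabilization by its universal property, and build a candidate inverse by taking the cofibre of $C \to \Sigma_\infty X$ equipped with a $C$-coaction. The easy direction $\beta\circ\tilde\alpha \simeq \id$ is the same in both accounts. The paper also works with $\Coalg_Q^{C/-/C}$ rather than $\Coalg_Q^{C/}$ (it notes these have the same costabilization), which gives a zero object and makes the analysis cleaner; this is a minor but helpful adjustment you should adopt.

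Where your proposal is genuinely underdeveloped is the hard direction. You describe it as showing that ``finite-colimit preservation together with stability of the source kills every potential extension class among the associated graded pieces'' of a divided-power filtration, but you give no mechanism for this. The paper's resolution is much sharper and avoids any filtration-and-obstruction argument: it proves directly (Proposition~\ref{prop:coalg_pushout}) that a \emph{single} suspension in $\Coalg_Q^{C/-/C}$ already yields a square-zero extension, namely $\Sigma_C D \simeq C \oplus (U_C^{\aug} D)[1]$. This is a concrete computation with the cosimplicial cofree resolution, comparing the levelwise pushout $C^{\otimes n} \amalg_{D^{\otimes n}} C^{\otimes n}$ with $(\Sigma_C D)^{\otimes n}$ and observing that after one suspension the nonlinear terms in $M$ map by zero, so the comparison factors through the linear-in-$M$ quotient. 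Once you know this, any object $B^0$ of the costabilization is a suspension $B^0 \simeq \Sigma_C B^1$ and hence already of the form $C \oplus U_C^{\aug} B^0$; no obstruction theory is needed.

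A second gap is your construction of $\beta$ as a functor. Endowing $\cofib(C \to \Sigma_\infty X)$ with a coherent $C$-coaction ``via the divided-power conilpotent filtration'' is not a definition. The paper handles this by constructing a global augmented forgetful functor $U^{\aug}\colon \Coalg_Q^{\eqTriangle} \to \Comod_Q$ from a colax $\Sseq_A$-monoidal structure on the underlying $\aug$ map (Construction~\ref{constr:U_aug}), then restricting to fibres over $C$. You should either reproduce that construction or point to it; the filtration language does not substitute for it.
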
 Notice that costabilization satisfies $\CoSt(\C^{/c}) \simeq \CoSt(\C)$, so we will instead focus on $\CoSt(\Coalg_Q^{C/-/C})$ in the proof, see \ref{subsec:costcomod}.

The paper is structured as follows: we begin with a recollection about symmetric sequences and (co)operads, as well as (co)algebras and (co)modules over them.

Then we move on to discuss the tangent complex construction (\ref{constr:tangent}) for coalgebras, a dual version of the usual cotangent complex of algebras. This functor is strongly related to the eventual comparison functor $\CoSt(\Coalg_Q^{C/-/C}(\V)) \to \Comod^Q_C(\V)$.

Finally we move on to the proof of the main theorem. This requires an understanding of pushouts of coalgebras (\ref{prop:coalg_pushout}) as well as an understanding of the tangent complex construction as well as its left adjoint. 
\subsection{Notation}
\begin{enumerate}
    \item $A, B$ are typically sets. These will typically be the set of colors of an operad/cooperad.

    \item $\C, \D$ are categories. By ``category'', we by default mean $(\infty, 1)$-category (also sometimes just called $\infty$-category). We will mention ordinary $1$-categories explicitly. 

    \item $\V$ is a presentably symmetric monoidal category, later on assumed also to be stable.  

    \item An $A$-labelled set is a set $[n] \to A$. We usually denote it as $\{a_1, \dots, a_n\}$ where repetitions are allowed. This is a "multiset" of $A$. 
    \item $\C^{\ast}$: add zero object. If $\C$ is already symmetric monoidal, can extend the product by letting tensoring with $0$ be canonically $0$. 

    \item $\Prl, \Prlst$: The category of small presentable categories. Similarly $\Prlst$ denotes the category of small presentable stable categories. These are subject to size conditions, as discussed in the next section.
    
    \item $\PrlV, \PrlstV$: Given a presentably symmetric monoidal category $\V$, we will use $\PrlV$ to denote the category of presentable categories with a presentable $\V$-action. Similarly if $\V$ is stable as well, then we let $\PrlstV$ denote the category of presentable stable categories with a presentable $\V$-action.  
    
\end{enumerate}
\subsection{Trees and (co)operads}

We also briefly use the language of trees to discuss operadic and cooperadic constructions. We follow a definition akin to  \cite[Definition~3.2.3]{Hinich:operad}. Our trees are nonplanar, has one initial/root vertex, and finitely many terminal/leaf vertices, and a nonzero number of internal vertices. Our trees can also be marked by elements of a symmetric sequence $V$: each internal vertex $x$ is then marked by some $v \in V(val(x))$ based on how many incoming edges there are of $x$. 

When we discuss modules (or comodules), the terminal edges of a tree are also marked by $a$ or $m$, purely decorators to indicate which edges should take the algebra as input and which should take the module. Here is an example of a 3-leaf corolla which is marked by $v \in V(3)$.
\[
\begin{tikzpicture}[->,shorten >=1pt,auto,node distance=2cm,
                    thick,main node/.style={circle,draw}]

  \node[main node] (1) {a};
  \node[main node] (2) [right of=1] {$m$};
  \node[main node] (3) [right of=2] {$a$};
  \node[main node] (4) [below of=2] {$v$};
  \node[main node] (5) [below of=4] {$m$};

  \path[every node/.style={font=\sffamily\small}]
    (4) edge node {} (1)
    (4) edge node {} (2)
    (4) edge node {} (3)
    (5) edge node {} (4);
\end{tikzpicture}
\]

\subsection{Tensored categories and (co)modules}
We discuss ways to give maps of (co)modules. This helps us define various functors of operads and cooperads because they are special cases of the data of an action category (which will symmetric sequences) acting on a module category (which will be the underlying category we're taking algebras/coalgebras on). Here we use the notion of tensored categories as developed in \cite[Chapter~4.2]{HA}. Now we begin with a definition of lax and colax functors of $\A$-tensored categories. 

\begin{definition}[Lax and colax morphisms]\label{def:lax}
Let $\C$ and $\D$ be tensored over a category $\A$. This is equivalent to $(\A, \C)$ and $(\A, \D)$ having $\LM$-monoidal operadic structures $\C^{\LM}, \D^{\LM}$. Then we define a {\em lax $\A$-monoidal functor} to the be structure of an $\LM$-operadic morphism $\C \to \D$ as $\LM$-operads, which is identity on the $\A$-factors. 

Similarly, a {\em colax $\A$-monoidal functor} is the structure of an $\LM$-operadic morphism $\C^{\op} \to \D^{\op}$ as $\LM$-operads which is identity on the $\A^{\op}$-factors. 

The data of a (co)lax morphism is usually by abuse of notation just referred to by the underlying functor on categories $\C \to \D$. 

As a variant of the above concepts, suppose we're given $\C, \D$ both $\A$-tensored categories. Then a {\em strongly $\A$-monoidal functor} from $\C \to \D$ is a morphism of $\LM$-operads $\C \to \D$ which is identity on the $\A$-factors and which preserves coCartesian lifts of active maps in $\LM$. Equivalently, it is a coCartesian fibration.  
\end{definition}

\begin{remark}
    Notice here we're using the fact that given an $\A$-tensored category $\C$ (with structure given by $\C^{\LM} \to \LM$), then $\C^{\op}$ is naturally $\A^{\op}$-tensored. Here we're using the natural structure of an $\LM$-monoidal structure on $\C^{\op}$ given the ones on $\C$. Note this structures is {\em not} just the opposite of the category $\C^{\LM}$. Instead one must take a fiberwise opposite construction of the coCartesian fibration $\C^{\LM} \to \LM$, as given by Barwick, Glasman and Nardin in \cite{BarwickGlasmanNardin}. If one thinks of this coCartesian fibration as a functor $\LM \to \Cat$, then this fiberwise opposite is composing this functor with the natural opposite involution $\op: \Cat \to \Cat$.

    Using Barwick, Glasman, and Nardin's construction \cite{BarwickGlasmanNardin}, we see the notion of strongly monoidal $\A$-functor is self dual, in other words it is both a lax and colax morphism.
\end{remark}

Then following \cite[Chapter~4.2]{HA}, we note that a lax $\A$-monoidal morphism $\C \to \D$ gives rise to a functor of left modules $\Mod_A(\C) \to \Mod_A(\D)$ for any $A \in \Alg(\A)$. Dually, a colax $\A$-monoidal morphism $\C \to \D$ gives rise to a functor of left comodules $\Comod_C(\C) \to \Comod_C(\D)$.


We for the most part with discuss when a functor $f: \C \to \D$ have a (co)lax structures on them. To do so, we usually construct the first step of the structure, ie a natural transformation from $f(c \otimes c') \to f(c) \otimes f(c')$ (for the lax case) and leave the rest of the structure implied. For the strongly monoidal case, this morphism would be a natural equivalence. The coherence data is usually clear from context by construction: for example if the lax morphism is some inclusion of a direct summand into a direct sum. 

We end with a brief lemma on right adjoints to strongly monoidal functors.
\begin{lemma}
    Given a strongly $\A$-monoidal functor $\pi: \C \to \D$ for $\A$-tensored categories $\C, \D$. Say $\pi: \C \to \D$ has a right adjoint $R$ (which is just a functor from $\D \to \C$). Then $R$ has a natural lax $\A$-monoidal structure. Furthermore, the induced functor $\pi: \LMod_A(\C) \to \LMod_A(\D)$ has a right adjoint given by the one induced by $R$.
\end{lemma}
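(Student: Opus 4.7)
The plan is to realize the lax structure on $R$ via a relative-adjunction / mate construction at the level of $\LM$-operads, and then invoke the general formalism of \cite[\S7.3.2]{HA} to pass the adjunction to module categories.

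First I would unpack the hypothesis: a strongly $\A$-monoidal functor is by definition a functor $\pi^{\LM}: \C^{\LM}\to \D^{\LM}$ over $\LM$ which is identity on the $\A$-factors and which carries $\LM$-cocartesian lifts of active arrows to $\LM$-cocartesian arrows. Fiberwise over the objects of $\LM$ this unwinds to saying that the tensor product maps $\pi(c\otimes c')\to \pi(c)\otimes \pi(c')$ and the unit comparison are equivalences. Next, I would produce the candidate lax structure on $R$ by running the classical mate construction fiberwise: for tuples $d_1,\dots,d_n \in \D$ together with a fixed action by an object $A \in \A$, the comparison map $R(d_1)\otimes\dots\otimes R(d_n)\to R(d_1\otimes\dots\otimes d_n)$ is defined as the adjunct under $\pi \dashv R$ of the composite $\pi(R(d_1)\otimes\dots\otimes R(d_n))\simeq \pi R(d_1)\otimes\dots\otimes \pi R(d_n) \to d_1\otimes\dots\otimes d_n$, where the equivalence uses that $\pi$ is strongly monoidal and the second map is the counit.

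To turn this collection of maps into an honest $\LM$-operadic morphism $R^{\LM}: \D^{\LM}\to \C^{\LM}$, I would apply the theory of relative adjunctions over $\LM$ (in the sense of \cite[Definition~7.3.2.2]{HA}): what one needs is that $\pi^{\LM}$ admits a right adjoint relative to $\LM$, which by the criterion of \cite[Proposition~7.3.2.6]{HA} is guaranteed as soon as (i) $\pi^{\LM}$ has a fiberwise right adjoint over each object of $\LM$, and (ii) $\pi^{\LM}$ carries cocartesian arrows over active arrows to cocartesian arrows. Condition (i) follows from the existence of $R$ since the fibers of $\C^{\LM},\D^{\LM}$ over an object of $\LM$ are products of copies of $\C,\D$ and $\A$ (with $\pi$ or identity on factors), and condition (ii) is exactly the strong monoidality hypothesis. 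This yields $R^{\LM}:\D^{\LM}\to \C^{\LM}$ which is identity on $\A$-factors, and by construction it satisfies the morphism-of-$\LM$-operad condition, i.e.\ is a lax $\A$-monoidal functor.

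Finally, I would transport this $\LM$-operadic adjunction to left modules. Since $R^{\LM}$ is a morphism of $\LM$-operads, it induces a functor $R:\LMod_A(\D)\to \LMod_A(\C)$ for every $A \in \Alg(\A)$, exactly as noted just before the statement. The adjunction $\pi\dashv R$ at the level of modules then follows from the relative adjunction upgrading theorem \cite[Proposition~7.3.2.11]{HA} (or by directly writing the unit and counit as the images of those of the underlying adjunction under the structural functors $\LMod_A(-)$ and using that the underlying-object functor $\LMod_A(-)\to -$ is conservative and reflects equivalences of units/counits). The main obstacle in this argument is the passage from a fiberwise right adjoint to an $\LM$-operadic right adjoint, because one must check the cocartesian-preservation condition (ii) above for $\pi^{\LM}$; everything after that is formal from \cite[\S7.3.2]{HA}.
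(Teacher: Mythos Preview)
Your proof is correct and takes a genuinely different route from the paper. The paper constructs the lax $\A$-monoidal structure on $R$ by hand: it explicitly describes the functor $R^{\LM}:\D^{\LM}\to\C^{\LM}$ on objects (sending $(a_1,\dots,a_n,d)\mapsto(a_1,\dots,a_n,Rd)$), on inert morphisms (via the canonical lift), and on active morphisms $(a_1,\dots,a_n,d)\to d'$ by writing down the mate, namely the adjunct of $\pi(a_1\otimes\cdots\otimes a_n\otimes Rd)\simeq a_1\otimes\cdots\otimes a_n\otimes\pi Rd\to a_1\otimes\cdots\otimes a_n\otimes d$ composed with $Rf$. The higher coherences are then declared to follow because ``this construction is easily seen to be natural in the active morphism $f$''. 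Your approach instead packages everything into the relative-adjunction machinery of \cite[\S7.3.2]{HA}: strong monoidality of $\pi$ is precisely the cocartesian-preservation hypothesis, the fiberwise right adjoints are products of $R$ and identities, and the cited criterion produces $R^{\LM}$ together with all its coherences at once. Both approaches yield the same explicit lax structure maps; yours is more robust in that coherence is handled by the machinery rather than by inspection, while the paper's is more self-contained and does not require the reader to unpack \cite[Proposition~7.3.2.6]{HA}. One small point you leave implicit is why the relative right adjoint $R^{\LM}$ is a map of $\LM$-operads (i.e.\ preserves inert cocartesian edges); this follows because over inert arrows the fibers of $\pi^{\LM}$ are products of identities and copies of $\pi$, so the fiberwise right adjoints are compatible with the projections, but it is worth saying explicitly (cf.\ \cite[Corollary~7.3.2.7]{HA}).
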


\begin{proof}
    We sketch the argument. We just need to extend the map from $\D \to \C$ to an operadic map on $\D^{\LM} \to \C^{\LM}$. To do so, we can focus only on the active morphisms, as the inert morphisms have a canonical lifting from the assignment $a_1, \dots, a_n, d \mapsto a_1, \dots, a_n, Rd$.

    Now we have active morphisms of two kinds. The first kind are those that arise from $\A^n \to \A$, which is trivial since we must have $R$ be identity on the multiplication structure of $\A$. The second kind are those that arise from $\A^n\times \D \to \D$. Such an active morphism arises from the tensor structure on $\D$, and can be seen as a morphism 
    \[
    f: (a_1, \dots, a_n, d) \to d',
    \]
    or equivalently a morphism
    \[
    f: (a_1\otimes \dots \otimes a_n \otimes d) \to d'.
    \]
    
    Such morphisms must map to the morphism 
    \[
    (a_1, \dots, a_n, Rd) \to R(a_1 \otimes \dots \otimes a_n \otimes d) \to Rd' 
    \]
    where the second morphism is natural and arises from $R$ applied to $f$. The first morphism arises from the adjunct of 
    \[
    \pi(a_1 \otimes \dots \otimes a_n \otimes Rd) \simeq a_1 \otimes \dots \otimes a_n \otimes \pi Rd \to a_1 \otimes \dots \otimes a_n \otimes d
    \]
    where the first equvalence is from the strong monoidality of $\pi$, and the second is the counit of $d$. This construction is easily seen to be natural in the active morphism $f$, hence extends to a functor on all of the active morphisms of $\D^{\LM}$.

    The second part of the lemma about morphisms of modules is obvious from our constructions.
\end{proof}

\subsection{Sizes of categories, presentability, and foundational considerations}\label{sec:indcomp}

Consider the category $\Mod_k$ of $k$-modules. Since any given $k$-module can have any cardinality, a priori we see that $\Mod_k$ cannot have just a set of objects, but a formal class of objects. 

This is also true for various types of categories defined as structured sets: they must have a formal class of objects already.

However, next consider $\Cat$, the category of categories. Intuitively its objects should consist of categories like $\Mod^{\op}_k$ and other structured sets. This gives us a dilemma: the objects of $\Cat$ are too large to even form a formal class. Similar problems exist when pondering related categories like $\Prl$ of presentable categories, which require a notion of size even to define. This problem also occurs when considering various forms of completions of categories, like $\Ind$-completion or colimit completion. For example, if we take $\Ind(\Mod^{\op}_k)$, the completion of $\Mod_k$ under small (here meaning set-sized) filtered colimits, the result will again be too large to even have a class of objects.

Our approach is to hypothesize a sequence of Grothendieck universes. More precisely, we can define a Grothendieck universe $U$ as a set closed under $\in$, power set, and $U$-small unions. Such choosing such a set $U$ is equivalent to choosing an inaccessible cardinak $\kappa$, as all Grothendieck universes are of the form $U = V_\kappa$ for some inaccessible cardinal $\kappa$.

We choose a sequence 
\[
U_0 \subseteq U_1 \subseteq U_2 \dots
\]

This approach is very versatile: now we can generally define $U_i$-small structured sets. The category of all $U_i$-small $k$-modules, $\Mod_{k,i}$, is then $U_{i+1}$-small. Then the category of all $U_{i+1}$-small categories $\Cat_{i+1}$ is $U_{i+2}$-small. This approach lets us take even further $\Ind$-completions: for example $\Ind_{i+2}(\Cat_{i+1})$ (adding all $U_{i+2}$-small filtered colimits) is $U_{i+3}$-small. 

However for the most part, we will just need three layers: a ``small'' (which we choose to be uncountable at least) universe $U$, a ``large'' universe $V$, and a ``very large'' universe $W$. Then again we have the three layers of sizes:
\begin{enumerate}
    \item $U$-small, which we also call ``small''. Our structured sets like $k$-modules, $k$-algebras, etc. will be small.
    \item $V$-small, which we also call ``large''. Our categories $\Mod_k$, $\Alg_k$ are here.
    \item $W$-small, which we also call ``very large''. Now the various categorical constructions like $\Cat$, and $\Prl_U$ lie here. 
\end{enumerate}
Very rarely we may require another universe above $W$. The theory of Grothendieck universes make it easy to hypothesize another $W'$ above $W$ if necessary.

Notice that the notions of limit/colimit completeness as well as presentability depend on sizes. We henceforth introduce a notation for presentability and completeness.

\begin{notation}[Presentability]
If $\C$ is a $U_{i+1}$ sized category, we say that it is presentable to mean that it is presentable using $U_i$-colimits, or $U_i$-presentable. This is similar to the size bounds needed to discuss completeness and cocompleteness. These notions of presentability and completeness are usually left implicit, and is clear depending on the category we're talking about. Thus if $\D$ is a large category which we call presentable, it by default means that $\D$ is presentable using small colimits. 

Notice that if $\C$ is a $U$-small category which is $U$-presentable, then by familiar arguments it has all $U$-colimits, and thus must be a poset. Since we do not wish to study such degenerate cases, we thus picked the above convention, where by default the presentability of $\C$ means that it uses colimits one size smaller instead. 

Notice also that if $\C$ is $U_0$-small, which means finite, then there is no way that $\C$ is presentable unless $\C$ is a poset (and has all finite colimits). Hence we also leave implicit that when we say $\C$ is presentable, by default we do not allow $\C$ to be finite.

As an abuse of notation, regardless of the size of $\C$, we write $\C \in \Prl$ to mean that $\C$ is presentable. If $\C$ is large, this $\Prl$ would be $\Prl_U$, the category of presentable categories under $U$-small colimits. If $\C$ is very large, the $\C \in \Prl$ would mean that $\C$ is presentable using $V$-small colimits. Very rarely, if $\C$ is very very large, ie $W'$-small where $W'$ is the universe above $W$, then $\C \in \Prl$ would mean that $\C$ is presentable using $W$-colimits.
\end{notation}

\subsection{Acknowledgements}

I'd like to thank my advisor, David Nadler, from whose guidance I have benefited enormously. In addition, I'd like to thank Germ\'an Stefanich, with whom I've had many interesting conversations and discussions on higher category theory and higher algebra. I'd also like to thank John Francis for several useful chats on the stabilization of algebras and bar/cobar duality.

\section{Symmetric sequences, (co)operads}
We define symmetric sequences and (co)operads. We follow the standard method of defining a circle product on symmetric sequences via a universal property. However for us we need to define multi-object symmetric sequences, in order to define multi-colored (co)operads, see \cite{LV}, \cite{Amabel}, \cite[Chapter~6]{DAG}, \cite{Ching}.

\subsection{Multi-object symmetric sequences}
We begin by defining a multi-object version of $\Fin^{\bij}$, the $1$-category of finite sets and bijections.

\begin{definition}[$\freesym{A}$]
Let $A$ be a set. We let $\freesymcoprod{A}$ denote the $1$-category whose objects are given by finite sets labelled with elements of $A$. In other words, they are given by maps $f: X \to A$ where $X$ is a finite set. These are all isomorphic to some $[n] \to A$, which can be thought of as a "multiset" with elements in $A$. They shall be denoted by $\{a_1, \dots, a_n\}$ where the $a_i$ are allowed to repeat.

The morphisms are given by bijections $\sigma$ of $X$ with itself, which takes $f: X \to A$ to 
\[
\begin{tikzcd}
    X \ar[r, "\sigma^{-1}"] & X \ar[r, "f"] & A.
\end{tikzcd}
\]
The monoidal product is given by disjoint union. Note we sometimes drop the $\amalg$ and write $\freesym{A}$ instead.
\end{definition}
\begin{remark}\label{rmk:freesym}
Note that $\freesymcoprod{A}$ is the free symmetric monoidal category generated by $A$, even infinity categorically. The monoidal product is given by taking disjoint unions, ie coproducts. 
\end{remark}

Next we mimic the definition of symmetric sequences over $\V$, which is usually defined as $\Fun(\Fin^{\bij}, \V)$, where $\V$ is say a presentably symmetric monoidal category, ie $\V \in \CAlg{\Prl}$. The intuition is that each cardinality $n$ gets sent to an object $S(n)$ which would encode the $n$-ary operations.

Here we want operations for each domain $\{a_1, \dots, a_n\}$ and codomain $a$. Here the domain is given by a finite set labelled by $A$ (so a map $X \to A$) and the codomain is a single element of $A$. To capture this data, we take functors out of $A \times \freesym{A}$, where the first component gives the codomain and the second component gives the domain.

\begin{definition}[$\Sseq_A$]\label{def:sseq}
    Given a set $A$ and a presentably symmetric monoidal category $\V$, we define {\it symmetric sequences over $A$ in $\V$} as follows:
    \[
    \Sseq_A(\V) := \Fun(\freesym{A} \times A, \V).
    \]
    Note that we may only write $\Sseq_A$ when $\V$ is clear from context.
\end{definition}

We also have a following variation of truncatd symmetric sequences using the truncated category $\freesymtrunc{A}{n}$, whose objects consists of sets with cardinality $\leq n$, and is otherwise the same as $\freesym{A}$.
\begin{definition}
Given $A$ a set and $\V \in \CAlg(\Prl)$.

We can define {\em truncated symmetric sequences over $A$ in $\V$} via 
\[
\Sseq_{A,\leq n}(\V) := \Fun(\freesymtrunc{A}{n} \times A, \V).
\]
Note that we may only write $\Sseq_{A, \leq n}$ when $\V$ is clear from context. 
\end{definition}

\subsection{Circle product}
Next we move on to defining the circle product. We follow an argument that uses universal properties, again following \cite{LV}, \cite{Amabel}, \cite[Chapter~6]{DAG}, \cite{Ching}.

\begin{lemma}\label{lemma:dayconequiv}
Let $A$ be a set and $\V^\otimes$ be a presentably symmetric monoidal category. Then $\Fun(\freesymcoprod{A}, \V^\otimes)$, with its Day convolution product, is the free $\V$-tensored presentably symmetric monoidal category. In other words, given $\C^\otimes \in \CAlg(\PrlV)$, the natural evaluation map
\[
\Fun_{\CAlg(\PrlV)}(\Fun(\freesym{A}, \V)^{\otimes_{\Day}}, \C^\otimes) \to \Fun(A, \C)
\]
is an equivalence. 
\end{lemma}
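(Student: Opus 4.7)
The plan is to factor the claimed universal property as a composite of three well-known free constructions: free symmetric monoidal category on a set, free presentably symmetric monoidal category on a symmetric monoidal category (Day convolution), and $\V$-linearization.

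First, by Remark \ref{rmk:freesym}, $\freesym{A}$ is the free symmetric monoidal $\infty$-category generated by the set (viewed as discrete $\infty$-groupoid) $A$. In particular, for any symmetric monoidal $\infty$-category $\mathcal{E}^\otimes$, the evaluation at the generating objects induces an equivalence
\[
\Fun^\otimes(\freesym{A}, \mathcal{E}^\otimes) \xrightarrow{\sim} \Fun(A, \mathcal{E}).
\]

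Second, I would invoke the universal property of Day convolution (see \cite[Proposition~4.8.1.10]{HA} or its variants): for a small symmetric monoidal $\infty$-category $\mathcal{D}^\otimes$, the presheaf category $\mathcal{P}(\mathcal{D})^{\otimes_{\Day}}$ is the free presentably symmetric monoidal category on $\mathcal{D}^\otimes$. That is, for any $\C^\otimes \in \CAlg(\Prl)$, restriction along the Yoneda embedding gives
\[
\Fun_{\CAlg(\Prl)}(\mathcal{P}(\mathcal{D})^{\otimes_{\Day}}, \C^\otimes) \xrightarrow{\sim} \Fun^\otimes(\mathcal{D}, \C).
\]
Applying this to $\mathcal{D} = \freesym{A}$, and using the fact that $\freesym{A}$ is an $\infty$-groupoid (all morphisms are bijections) so that $\freesym{A}^{\op} \simeq \freesym{A}$, we obtain that $\Fun(\freesym{A}, \Spc)^{\otimes_{\Day}}$ is the free object of $\CAlg(\Prl)$ on $\freesym{A}^\otimes$.

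Third, I would upgrade to the $\V$-linear setting using base change. Since $\PrlV \simeq \LMod_\V(\Prl)$ and the symmetric monoidal structure on $\Prl$ restricts to one on $\PrlV$, the functor $(-) \otimes \V : \CAlg(\Prl) \to \CAlg(\PrlV)$ is left adjoint to the forgetful functor. Consequently the free object on $\freesym{A}^\otimes$ in $\CAlg(\PrlV)$ is $\Fun(\freesym{A}, \Spc)^{\otimes_{\Day}} \otimes \V$, and under the standard identification of tensor products in $\Prl$ this becomes $\Fun(\freesym{A}, \V)$ equipped with its Day convolution symmetric monoidal structure. Composing the equivalences from the three steps yields the desired equivalence
\[
\Fun_{\CAlg(\PrlV)}(\Fun(\freesym{A}, \V)^{\otimes_{\Day}}, \C^\otimes) \xrightarrow{\sim} \Fun^\otimes(\freesym{A}, \C) \xrightarrow{\sim} \Fun(A, \C).
\]

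The main obstacle is bookkeeping rather than depth: one must verify that Day convolution is compatible with the base change $(-) \otimes \V$ (so that $\mathcal{P}(\freesym{A})^{\otimes_{\Day}} \otimes \V$ really is $\Fun(\freesym{A},\V)^{\otimes_{\Day}}$), and check that the Yoneda-type maps in each step genuinely assemble into the evaluation map appearing in the statement. The groupoid nature of $\freesym{A}$ is what makes the identifications $\freesym{A} \simeq \freesym{A}^{\op}$ and $\Fun \simeq \mathcal{P}$ harmless, and is what allows the proof to bypass any distinction between covariant and contravariant presheaves.
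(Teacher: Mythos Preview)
Your proposal is correct and follows essentially the same approach as the paper: both factor the universal property through (i) the $\V$-linearization adjunction $(-)\otimes\V \dashv \text{forget}$, (ii) the Day convolution universal property of $\Fun(\freesym{A},\Spc)$ as the free presentably symmetric monoidal category on $\freesym{A}$, and (iii) the identification of $\freesym{A}$ as the free symmetric monoidal category on $A$, using the groupoid property to identify presheaves with covariant functors. The paper presents these steps more tersely and in a slightly different order, but the content is the same.
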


\begin{proof}
Here the evaluation map is described as follows: we use the inclusion $A \to \freesym{A}$ taking $a \in A$ to singleton labelled sets. There is a Yoneda embedding $y: \freesym{A} \to \Fun(\freesym{A}, \Spc)$ (note here that $\freesym{A}$ is equivalent to its opposite canonically; it's a groupoid). Lastly, we use $\Fun(\freesym{A}, \Spc) \otimes \V \simeq \Fun(\freesym{A}, \V)$ as presentably symmetric monoidal categories. Then the evaluation is given by
\[
\begin{aligned}
\Fun_{\CAlg(\PrlV)}(\Fun(\freesym{A}, \V), \C) \simeq \Fun_{\CAlg(\Prl)}(\Fun(\freesym{A}, \Spc), \C) \simeq \Fun_{\SymMonCat}(\freesym{A}, \C) \simeq \Fun(A, \C).
\end{aligned}
\]
The first equivalence is via the adjunction between $\V \otimes$ and the forgetful functor. The second is using the free completeness property of $\Fun(\freesym(A), \C)$, and the third is precomposition by $y$, which is an equivalence because $\freesym{A}$ is the free symmetric monoidal category generated by $A$ (\ref{rmk:freesym}). 
\end{proof}

Now we can use (\ref{lemma:dayconequiv}) by plugging in $\C = \Fun(\freesym{A}, \V)^{\otimes_{\Day}}$ and we get the following equivalence:
\begin{align*}
\Fun_{\CAlg(\PrlV)}(\Fun(\freesym{A}, \V), \Fun(\freesym{A}, \V)) & \simeq \Fun(A, \Fun(\freesym{A}, \V)) \\
& \simeq \Fun(A \times \freesym{A}, \V) =: \Sseq_A(\V)
\end{align*}
Now note that the left hand side has a composition, since it's the endomorphisms of $\Fun(\freesym{A}, \V)$, which hence induces a circle product on $\Sseq_A(\V)$.

\begin{definition}[Circle product]\label{def:circ}
Given a set $A$ and a presentably symmetric monoidal category $\V$, we define the circle product $\circ$ on $\Sseq_A(\V)$ to be the opposite of the composition product given by the equation 
\[
\End_{\CAlg(\PrlV)}(\Fun(\freesym{A}, \V)) \simeq \Sseq_A(\V).
\]
\end{definition}
\begin{remark}
Note we reverse the composition product to follow usual conventions.
\end{remark}

\begin{remark}\label{rmk:circformula}
    We can calculate out the circle product in detail: given a symmetric sequences $S \in \Sseq_A(\V)$, we note that in $\End_{\CAlg(\PrlV)}(\Fun(\freesym{A}, \V))$, $S$ corresponds to the functor that takes $X \in \Fun(\freesym{A}, \V)$ to 
    \[
    \colim_{\{a_1, \dots, a_n\} \to X} S(a_1) \otimes \dots \otimes S(a_n) 
    \]
    where $\{a_1, \dots, a_n\} \to X$ ranges through all maps from a representable (perhaps tensored with $\V$) into $X$.

    Then, given another symmetric sequence $T$, we see that $S \circ T(\{a_1, \dots, a_n\}, a)$ should evaluate to 
    \[
    \colim_{\{b_1, \dots, b_m\} \to S(a)} T(\{a_1, \dots, a_{i_1}\}, b_1) \otimes \dots \otimes T(\{a_{i_{m+1}}, \dots, a_n\}, b_m).  
    \]

    Now this is a particularly simple colimit due to the nature of our category: it evaluates to 
    \[
    \bigoplus_{\{b_1, \dots, b_m\}} S(\{b_1, \dots, b_m\}, a) \otimes_{\Sigma_m} [T(\{a_1, \dots, a_{i_1}\}, b_1) \otimes \dots \otimes T(\{a_{i_{m+1}}, \dots, a_n\}, b_m)].  
    \]
    which is the usual circle product formula!
\end{remark}
\begin{remark}\label{rmk:circunit}
The unit for the $\circ$ operation is easily read off from the evaluation map (\ref{def:circ}): it is the $A$-symmetric sequence $\Triv$ such that 
\begin{itemize}
    \item $\Triv(a, a)$ is the unit for the monoidal structure on $\V$
    \item Otherwise everything else evaluates to $0 \in \V$, the initial object. 
\end{itemize}
\end{remark}

\subsection{(Co)Operads and (co)algebras over them}
Now we can define operads and cooperads.

\begin{definition}[Operads and cooperads]
    Let $A$ be a set, and $\V$ be a presentably symmetric monoidal category. Then we define the category of {\it operads in $\V$ over $A$} to be 
    \[
    \Op_A(\V) := \Alg(\Sseq^{\circ}_A).
    \]
    Similarly, we define the category of {\it cooperads in $\V$ over $A$} to be 
    \[
    \CoOp_A(\V) := \Coalg(\Sseq^{\circ}_A).
    \]
    We may drop the $\V$ from the notation when the context is clear. 
\end{definition}

\begin{example}
In any monoidal category, the unit is naturally an algebra and coalgebra. Thus, $\Triv$ (\ref{rmk:circunit}) is canonically an operad and cooperad. 
\end{example}

\begin{remark}\label{remark:univ_monad}
We can see operads also as ``universal monads'' in the following way. Notice that the as monoidal categories, 
\[
\Sseq_A(\V)^{\circ} \simeq \End_{\CAlg(\PrlV)}(\Fun(\freesym{A}, \V)).
\]
By \ref{lemma:dayconequiv} as well as the Yoneda lemma, we notice that 
\[
\End_{\CAlg(\PrlV)}(\Fun(\freesym{A}, \V)) \simeq \End_{\Fun(\CAlg(\PrlV), \Cat)}(\Fun(A, {-})).
\]
So (co)algebras in $\Sseq_A(\V)^{\circ}$ are equivalent to (co)algebras 
\[
\End_{\Fun(\CAlg(\PrlV), \Cat)}(\Fun(A, {-})).
\]

Thus an operad can equivalently be given as a monad on the functor $\C \mapsto \Fun(A, \C)$ for $\C \in \CAlg(\PrlV)$. Intuitively one can say that an operad is ``monad on $\Fun(A, \C)$ which is natural in $\C$''. 

Dually, we have the same result for cooperads: they are comonads for the functor $\C \mapsto \Fun(A, \C)$ for $\C \in \CAlg(\PrlV)$. Intuitively one can say that a cooperad is then a ``comonad on $\Fun(A, \C)$ which is natural in $\C$''. 
\end{remark}

\begin{definition}\label{def:nonunital}
Given a set $A$ and a stably presentably monoidal category $\V$, one can define a variant of operads and cooperads: A (co)operad $S$ is {\it $1$-reduced} if $S(\empty, a)$ is the initial object for all $a \in A$, and $S(a, b)$ is either the unit of $\V$ if $a = b$, or is the inital object if $a \neq b$. 

One can see that such data is captured by the symmetric sequence's values on $A$-labelled sets of cardinality two or larger, and indeed one can define $1$-reduced (co)operads by starting with the category $\freesym{\geq 2, A}$.

In the sequel, we mainly work with $1$-reduced (co)operads. These objects should be thought of as ``augmented'' or ``non-unital'' variants of (co)operads.
\end{definition}
\begin{remark}\label{rmk:conilpcoop}
These cooperads, when they are $1$-reduced, are usually called {\it conilpotent cooperads with divided powers}, see \cite{Amabel}, \cite[Chapter~6]{DAG}.
\end{remark}

\begin{remark}
    When $S$ is a $1$-reduced (co)operad, there's a natural maps 
    \[
    \Triv \to S \to \Triv
    \]
    of (co)operads. One of the maps is the (co)unit and the other is a sort of (co)augmentation map, hence why we should think of $1$-reducedness as a sort of augmentation. 
\end{remark}

We can next define algebras and coalgebras over operads and cooperads as follows: 
\begin{definition}
Given $A$ a set, $\V$ a presentably symmetric monoidal category, and $\C \in \CAlg(\PrlV)$. There is a left action of $\Sseq_A$ on $\Fun(A, \C)$ (also denoted by $\circ$) using the natural action of $\End_{\CAlg(\PrlV)}(\Fun(\freesym{A}, \V))$ on $\Fun_{\CAlg(\PrlV)}(\Fun(\freesym{A}, \V), \C)$ by precomposition (and using lemma (\ref{lemma:dayconequiv})). 

Hence given an operad $P$, we can define the category of {\it algebras over $P$ in $\C$} to be
\[
\Alg_P(\C) := \LMod_P(\Fun(A,\C)).
\]
Similarly, give a cooperad $Q$, we can define the category of {\it coalgebras over $Q$ in $\C$} to be 
\[
\Coalg_Q(\C) := \LComod_Q(\Fun(A, \C)).
\]
\end{definition}

\begin{remark}
One can calculate that the left action of $\Sseq_A$ on $\Fun(A, \C)$ has an explicit formula, just like the circle product does (\ref{rmk:circformula}). It is given exactly the same formula, just letting $S \in \Sseq_A$ and $T \in \Fun(\A, \C)$:
\[
S \circ T \simeq 
    \bigoplus_{\{b_1, \dots, b_m\}} S(\{b_1, \dots, b_m\}, a) \otimes_{\Sigma_m} [T(b_1) \otimes \dots \otimes T(b_m)].  
\]
\end{remark}

We'll fix a notational convention.

\begin{notation}[Convention on $A, \V, \C$]\label{notation:AVC}
   We fix a set $A$, $\V \in \CAlg(\Prl)$, and $\C \in \CAlg(\PrlV)$. (Co)operads will be $A$-colored and over $\V$, and (co)algebras will be taken over $\C$.
\end{notation}
\subsection{Adjunctions of (co)algebras}

We use the functoriality of modules over algebras (and comodules over coalgebras) \cite[Section~4.2.3]{HA} to define various adjunctions of (co)algebras over (co)operads (see discussion preceeding Example 2.10 and Example 2.14 of \cite{Amabel}). In particular, if we have an operad map $f: P \to P' \in \Op_A$, this induces a natural adjunction
\begin{equation}\label{eqn:algadj}
    \begin{tikzcd}
	\Alg_P & \Alg_{P'}
	\arrow["f_{!}"{name=0}, shift left=2, from=1-1, to=1-2]
	\arrow["f^{\ast}"{name=1},shift left=2, from=1-2, to=1-1]
 \arrow["\dashv"{anchor=center, rotate=-90}, draw=none, from=0, to=1]
\end{tikzcd}
\end{equation}
given by "extension by scalars" and "restriction" respectively. Notation from \cite{Amabel}

We apply this to the unit map $\Triv \to P$:
\begin{example}[Free-forgetful adjunction]\label{ex:freeforgetful}
Fix $A \in \Set, \V \in \CAlg(\Prl), \C \in \CAlg(\PrlV)$, following \ref{notation:AVC}. Then
given an operad $P \in \Op_A(\V)$, the {\it free-forgetful adjunction} is gotten from applying \eqref{eqn:algadj} to the unit map $\Triv \to P$, noting that $\Alg_{\Triv}(\C) \simeq \Fun(A, \C)$. This gives us:

\begin{equation}
    \begin{tikzcd}
    \Fun(A, \C) & \Alg_{P}
	\arrow["\Free"{name=0}, shift left=2, from=1-1, to=1-2]
	\arrow["U"{name=1},shift left=2, from=1-2, to=1-1]
    \arrow["\dashv"{anchor=center, rotate=-90}, draw=none, from=0, to=1]
\end{tikzcd}
\end{equation}
\end{example}
When $P$ is augmented (there is a canonical augmentation when $P$ is $1$-reduced), we also have an augmentation map $P \to \Triv$, giving us another adjunction:
\begin{example}[Indecomposables-trivial adjunction]
    Fix $A \in \Set, \V \in \CAlg(\Prl), \C \in \CAlg(\PrlV)$, following \ref{notation:AVC}.
    Suppose we're given an operad $P \in \Op_A(\V)$. Let $P$ have an augmentation $\epsilon: P \to \Triv$. We apply \eqref{eqn:algadj} to $\epsilon$ and get the {\it indecomposables-trivial algebra} adjunction:
    \[
    \begin{tikzcd}
	\Alg_P(\C) & \Fun(A, \C).
	\arrow["\Indecomp"{name=0}, shift left=2, from=1-1, to=1-2]
	\arrow["\Triv"{name=1},shift left=2, from=1-2, to=1-1]
    \arrow["\dashv"{anchor=center, rotate=-90}, draw=none, from=0, to=1]
    \end{tikzcd}
    \]
\end{example}

Similarly, given a map of cooperads $g: Q \to Q' \in \CoOp_A$, we have a natural adjunction

\begin{equation}\label{eqn:coalgadj}
    \begin{tikzcd}
	\Coalg_Q & \Coalg_{Q'}
	\arrow["g^L"{name=0}, shift left=2, from=1-1, to=1-2]
	\arrow["g_R"{name=1},shift left=2, from=1-2, to=1-1]
 \arrow["\dashv"{anchor=center, rotate=-90}, draw=none, from=0, to=1]
\end{tikzcd}
\end{equation}
given by "co-restriction/extension" and "co-extension/restriction of scalars". 

\begin{example}[forgetful-cofree]\label{ex:forgetfulcofree}
Fix $A \in \Set, \V \in \CAlg(\Prl), \C \in \CAlg(\PrlV)$, following \ref{notation:AVC}.
Suppose we're given a cooperad $Q \in \CoOp_A(\V)$. Then the {\it forgetful-cofree adjunction} is gotten from applying \eqref{eqn:coalgadj} to the counit map $Q \to \Triv$. This gives us:

\begin{equation}
    \begin{tikzcd}
	\Coalg_Q & \Fun(A, \C)
    \arrow["U"{name=0}, shift left=2, from=1-1, to=1-2]
	\arrow["\Cofree"{name=1},shift left=2, from=1-2, to=1-1]
 \arrow["\dashv"{anchor=center, rotate=-90}, draw=none, from=0, to=1]
\end{tikzcd}
\end{equation}
\end{example}
When $Q$ is coaugmented (there is a canonical coaugmentation when $Q$ is $1$-reduced), we also have an coaugmentation map $\Triv \to Q$, giving us another adjunction:
\begin{example}[Trivial-primitives adjunction]
    Fix $A \in \Set, \V \in \CAlg(\Prl), \C \in \CAlg(\PrlV)$, following \ref{notation:AVC}.
    Suppose we're given a cooperad $Q \in \Op_A(\V)$. Let $Q$ have an augmentation $\eta: Q \to \Triv$. We apply \eqref{eqn:coalgadj} to $\eta$ and get the {\it trivial coalgebra-primitives} adjunction:
    \[
    \begin{tikzcd}
	\Fun(A, \C) & \Coalg_{Q}.
	\arrow["\Triv"{name=0}, shift left=2, from=1-1, to=1-2]
	\arrow["\Prim"{name=1},shift left=2, from=1-2, to=1-1]
 \arrow["\dashv"{anchor=center, rotate=-90}, draw=none, from=0, to=1]
    \end{tikzcd}
    \]
\end{example}

\subsection{(Co)modules over (co)operads}

Next we define a notion of module and comodule over a (co)operad. To do this we show there is a symmetric monoidal functor $\Md: \Sseq_A \to \Sseq_{A \amalg A}$. Intuitively it takes an operad $P$ to a related operad $\Md_P$ whose algebras are pairs of an algebra and a module over it. 

For the sake of clarity, instead of working with $A \amalg A$, we will denote the second copy of $A$ by $dA$, which are supposed to represent some infinitesimal approximation to the "generators" in $A$. Hence $\Md_Q$ will be an object of $\Sseq_{A \amalg dA}$. So we fix once and for all a canonical identification $d: A \to dA$.

From now on we also only work with symmetric sequences over {\it stable} presentably symmetric monoidal categories (so $\V \in \CAlg(\Prlst))$, although perhaps having a zero object is enough for our definitions. 
\begin{notation}[Convention on $A, \V, \C$]\label{notation:AVC2}
   We fix a set $A$, $\V \in \CAlg(\Prlst)$, and $\C \in \CAlg(\PrlstV)$. (Co)operads will be $A$-colored and over $\V$, and (co)algebras will be taken over $\C$. Notice that we just added stability assumptions to \ref{notation:AVC}.
\end{notation}

Let's begin with some notation. Let $\C^{\ast}$ denote the free addition of a zero object into $\C$. Then we construct a map of $1$-categories 
\[
\freesym{A \amalg dA}\times (A\amalg dA) \to (\freesym{A} \times A)^{\ast}
\]
as follows:
\begin{construction}\label{constr:premod}
Let $A$ be a set. Let $D: \freesym{A \amalg dA}\times (A\amalg dA) \to (\freesym{A} \times A)^{\ast}$ be constructed as follows: 
\begin{enumerate}
    \item Objects of the form $(\{a_1, \dots, a_n\}, a)$ with all $a_i \in A$ and $a \in A$ are sent to $(\{a_1, \dots, a_n\}, a)$.
    \item Objects of the form $(\{a_1,\dots da_i \dots, a_n\}, da)$ where the domain has exactly one object $da_i \in dA$, and codomain $da \in dA$ are sent to $(\{a_1,\dots a_i \dots, a_n\}, a)$ (here one uses the inverse of the canonical identification $d: A \to dA$). 
    \item All other objects are sent to $0$, the zero object.
\end{enumerate}
On morphisms, one sends the automorphisms to the obvious ones (either the same one in the first case, the natural induced one using $d$ in the second case, or the zero map in the third case). 
\end{construction}

Notice that as long as $\V$ has a zero object, we have \[
\Fun^{\ast}([\freesym{A} \times A)]^{\ast}, \V) \simeq \Sseq_A.
\] Hence we can define $\Md$ as follows:

\begin{definition}
    Fix a set $A$ and $\V \in \CAlg(\Prlst)$.
    Let $\Md: \Sseq_A \to \Sseq_{A \amalg dA}$ be defined by restriction along $D$(\ref{constr:premod}). Explicitly, 
    \[
    \begin{tikzcd}
        \Sseq_A \ar[r, "\simeq"] & \Fun^{\ast}([\freesym{A} \times A)]^{\ast}, \V) \ar[r, "D^{*}"] & \Sseq_{A \amalg dA},
    \end{tikzcd}
    \]
    where we're using the definition of $\Sseq$ (\ref{def:sseq})
\end{definition}

Next we check that this functor is symmetric monoidal by calculating it explicitly with formulas. Unfortunately we weren't able to find a natural argument using universal properties. 

\begin{lemma}\label{lemma:modmonoidal}
Fix a set $A$ and $\V \in \CAlg(\Prlst)$. The functor $\Md: \Sseq_A \to \Sseq_{A \amalg dA}$ is monoidal with respect to the circle product (or $\circ$-monoidal).
\end{lemma}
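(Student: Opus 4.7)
The plan is to establish the two pieces of data that make a functor $\circ$-monoidal: a natural equivalence $\Md_S \circ \Md_T \simeq \Md_{S \circ T}$ and a unit equivalence $\Md_{\Triv_A} \simeq \Triv_{A \amalg dA}$, with the expected coherences. The concrete strategy is to compute both sides using the explicit circle product formula of \ref{rmk:circformula} and match them term-by-term via a short case analysis on the color of the root and on whether the multiset of inputs contains a $dA$-element.

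First I would handle the unit. Since $\Triv_A$ is supported on pairs $(\{a\}, a)$ with value $\mathbf{1}_{\V}$, the definition of $D$ in \ref{constr:premod} sends $(\{a\},a)$ to itself and $(\{da\},da)$ to $(\{a\},a)$, with all other objects sent to the zero object. So $\Md_{\Triv_A}$ is supported precisely on $(\{a\},a)$ and $(\{da\},da)$ with value $\mathbf{1}_{\V}$, which is $\Triv_{A \amalg dA}$ (compare \ref{rmk:circunit}).

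Next, for the multiplicative compatibility, fix $(X,c) \in \freesym{A \amalg dA} \times (A \amalg dA)$ and expand $(\Md_S \circ \Md_T)(X,c)$ via \ref{rmk:circformula} as a sum over multisets $\{c_1,\dots,c_m\} \subseteq A \amalg dA$ and partitions $X = X_1 \amalg \dots \amalg X_m$. The case split is then forced by the vanishing pattern of $\Md$. If $c \in A$, then $\Md_S(\{c_1,\dots,c_m\},c)$ is nonzero only when all $c_i \in A$, and in turn $\Md_T(X_i,c_i)$ is nonzero only when $X_i$ is entirely $A$-labelled; so both sides vanish unless $X$ is $A$-labelled, in which case both reduce to $(S \circ T)(X,c)$. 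If $c = da \in dA$, then $\Md_S(\{c_1,\dots,c_m\},da)$ is nonzero only when exactly one $c_j$ lies in $dA$, say $c_j = dc_j'$, which in turn forces exactly one block $X_j$ to contain the unique $dA$-labelled element of $X$ (so if $X$ contains zero or $\geq 2$ such elements, both sides vanish). Unfolding the identification $d\colon A \to dA$ on that single ``module strand'' rewrites the sum as the corresponding sum for $(S\circ T)(\bar X, a)$, where $\bar X$ is $X$ with its unique $dA$-element de-dashed, which is precisely $\Md_{S \circ T}(X, da)$ by \ref{constr:premod}.

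To promote this pointwise check to a genuine $\circ$-monoidal structure (with all higher coherences), I would reinterpret $\Md$ through the universal description of \ref{lemma:dayconequiv} and \ref{def:circ}: restriction along $D$ corresponds to a $\V$-linear, colimit-preserving functor $\Fun(\freesym{A\amalg dA}, \V) \to \Fun(\freesym{A}, \V)^{\ast}$, and the case analysis above is exactly the statement that this functor intertwines the endomorphism composition coming from the two free $\V$-tensored symmetric monoidal categories. Packaging $\Md$ this way produces a morphism of $\E{1}$-algebras in $\categ{Cat}$ and hence a strong $\circ$-monoidal structure, with coherence inherited from the endomorphism operad. The principal technical obstacle is precisely this last step, namely verifying that the pointwise identifications above assemble into coherent data in the $\infty$-categorical sense rather than only on the underlying homotopy categories; the rigidity afforded by the groupoid $\freesym{A\amalg dA}$ and the explicit nature of $D$ makes this manageable, essentially because all the higher simplices in the comparison are built from permutations that act transparently on the formula of \ref{rmk:circformula}.
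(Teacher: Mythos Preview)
Your core argument---the case analysis on whether the output color lies in $A$ or $dA$, using the vanishing pattern of $\Md$ to collapse the circle product formula---is exactly what the paper does, and your treatment is in fact more careful (you explicitly verify the unit, which the paper omits). The paper stops after the pointwise calculation and simply asserts that ``since our equivalences are identifications of colimits, they are natural and give us the desired equivalence,'' without attempting to package the coherences via any universal property.

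Your final paragraph, where you try to promote the pointwise identification to a map of $\E{1}$-algebras by reinterpreting $\Md$ through \ref{lemma:dayconequiv}, goes beyond what the paper does, but as written it is more of a sketch than an argument: the map $D$ of \ref{constr:premod} is not a symmetric monoidal functor between the free symmetric monoidal categories, so it is not immediate that restriction along $D$ induces a map of endomorphism monoids in the way you suggest. The paper's own remark after the proof acknowledges that alternative arguments ``seem to either use a colimit identification or an identification of left Kan extension, both of which in essence is this calculation above''---in other words, the paper is content with the same level of coherence you have already established in your first two paragraphs and does not supply the extra structure you are gesturing at.
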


\begin{proof}
Given two symmetric sequences, we need to show that $\Md_{X \circ Y} \simeq \Md_X \circ \Md_Y$.
This just amounts to a colimit calculation: we use our colimit formula to evaluate $\Md_X \circ \Md_Y(\{a_1, \dots, a_n\}, a)$ as
\[
\bigoplus_{\{b_1, \dots, b_m\}} \Md_X(\{b_1, \dots, b_m\}, a) \otimes_{\Sigma_m} [\Md_Y(\{a_1, \dots, a_{i_1}\}, b_1) \otimes \dots \otimes \Md_Y(\{a_{i_{m+1}}, \dots, a_n\}, b_m)].
\]

Now notice that $\Md_X(\{b_1, \dots, b_m\}, a)$ is zero unless all the $b_i \in A$, in which case this evaluates to $X(\{b_1, \dots, b_m\}, a)$, similar with all the $\Md_Y$ terms. Thus this evaluates to 
\[
\bigoplus_{\{b_1, \dots, b_m\}} X(\{b_1, \dots, b_m\}, a) \otimes_{\Sigma_m} [Y(\{a_1, \dots, a_{i_1}\}, b_1) \otimes \dots \otimes Y(\{a_{i_{m+1}}, \dots, a_n\}, b_m)] \simeq X\circ Y (\{a_1, \dots, a_n\}, a).
\]

But this agrees by definition with $\Md_{X \circ Y}(\{a_1, \dots, a_n\}, a)$.

One can do a similar argument for when you evaluate $\Md_X \circ \Md_Y (\{a_1, \dots da_i \dots a_n\}, da)$, and it again agrees with $\Md_{X\circ Y} (\{a_1, \dots da_i \dots a_n\}, da)$. On all other inputs, both sides evalute to $0$.

Now since our equivalences are identifications of colimits, they are natural and give us the desired equivalence $\Md_{X \circ Y} \simeq \Md_X \circ \Md_Y$.
\end{proof}
\begin{remark}
    There are various other ways to see this result, but most of them seem to either use a colimit identification or an identification of left Kan extension, both of which in essence is this calculation above. 
\end{remark}
\begin{notation}
Fix a set $A$, $\V \in \CAlg(\Prlst)$, and $\C \in \CAlg(\PrlstV)$. Then let $S \in \Sseq_A(\V)$. Given a pair $C \in \V$ and $M \in \V$, we use $S \circ (C;M)$ to denote the second factor of $\Md_S \circ (C, M)$. In other words, 
\[
\Md_S \circ (C, M) = (S\circ V, S \circ (C;M)).
\]
This notation follows the conventions of Loday and Vallette \cite[Section~6.1.1]{LV}.
\end{notation}

Now we can talk about (co)modules for (co)operads:
\begin{definition}[Modules and comodules]
    Fix a set $A$, $\V \in \CAlg(\Prlst)$, and $\C \in \CAlg(\PrlstV)$ following \ref{notation:AVC2}.

    Given $P$ an $A$-operad over $\V$. Let $\Md_P:= \Md(P)$ be the {\it operad of modules over $P$}. This is an operad by (\ref{lemma:modmonoidal}).
    Further, let $\Mod_P(\C)$ or $\Mod_P$ denote the category of algebras over $\Md_P$. 

    Similarly let $Q$ be an $A$-cooperad over $\V$. Then we let $\Comod_Q := \Md(Q)$ be the {\it cooperad of comodules over $Q$}. Again, this is a cooperad by (\ref{lemma:modmonoidal}). And let $\Comod_Q(\C)$ or $\Comod_Q$ denote the category of coalgebras over $\Md_Q$.
    
\end{definition}
\subsection{External direct sum}
To aid in our analysis of module and comodule categories, we now define an external direct sum. Given two sets $A, B$, we now construct a functor of external direct sum from $\Sseq_A \times \Sseq_B \to \Sseq_{A\amalg B}$. 

\begin{construction}[External direct sum]\label{constr:sum}
    Given two sets $A, B$ and $\V \in \CAlg(\Prl)$, we have natural restriction maps coming from the inclusions of $A \to A \amalg B$, inducing $\freesym{A} \times A \to \freesym{A \amalg B} \times (A \amalg B)$, and similar with $B$. Thus we have the restriction functor 
    \[
    \res: \Sseq_{A \amalg B} \to \Sseq_A \times \Sseq_B
    \]
    We can also left/right Kan extend along the inclusion 
    \[
    (\freesym{A} \times A) \amalg (\freesym{B} \times B) \to \freesym{A \amalg B} \times (A \amalg B)
    \]
    to get the same functor ${-} \oplus {-}$ of extension by zero, or "external direct sum"
    \[
    \oplus: \Sseq_A \times \Sseq_B \to \Sseq_{A \amalg B}.
    \] This is both a left and right adjoint to the rectriction functor.
    
    The left and right Kan extensions agree because $\V$ has an initial object, and because $\freesym{A \amalg B}$ is a very simple category with no morphisms between two different objects. 
\end{construction}

Now one can see the following:
\begin{lemma}
    The functor $\oplus$ as constructed (\ref{constr:sum}) is strongly $\circ$-monoidal.
\end{lemma}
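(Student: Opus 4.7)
The plan is to verify strong monoidality using the explicit colimit formula for $\circ$ from Remark \ref{rmk:circformula}, mirroring the approach taken in the proof of Lemma \ref{lemma:modmonoidal}. Given $(S_A, S_B), (T_A, T_B) \in \Sseq_A \times \Sseq_B$, I want to construct a natural equivalence
\[
(S_A \oplus S_B) \circ (T_A \oplus T_B) \;\simeq\; (S_A \circ T_A) \oplus (S_B \circ T_B),
\]
together with a unit equivalence $\Triv_A \oplus \Triv_B \simeq \Triv_{A\amalg B}$. The unit equivalence is immediate from Remark \ref{rmk:circunit}, since $\Triv_{A\amalg B}$ is supported on inputs $(\{c\}, c)$, and every such input lies entirely in $A$ or entirely in $B$.

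For the binary compatibility, evaluate both sides on an object $(\{c_1,\dots,c_n\}, c) \in \freesym{A \amalg B} \times (A \amalg B)$. The right-hand side vanishes unless $c$ and all the $c_i$ lie in the same summand. Suppose $c \in A$; expanding the left-hand side via the circle-product formula gives
\[
\bigoplus_{\{d_1,\dots,d_m\}} (S_A \oplus S_B)(\{d_1,\dots,d_m\}, c) \otimes_{\Sigma_m} \bigotimes_j (T_A \oplus T_B)(\text{part}_j, d_j).
\]
Since $c \in A$, the leading factor forces every $d_j \in A$ for the summand to be nonzero, and then each $(T_A \oplus T_B)(\text{part}_j, d_j)$ forces every element of $\text{part}_j$ to lie in $A$. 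So the colimit collapses to the contributions where all $c_i \in A$, and it evaluates precisely to $(S_A \circ T_A)(\{c_1,\dots,c_n\}, c)$. The symmetric computation handles $c \in B$, and the colimit is zero for mixed inputs, matching the right-hand side.

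Since the equivalences above are identifications of colimit summands, they are natural in both variables, and the resulting maps assemble (as in Lemma \ref{lemma:modmonoidal}) into the coherence data of a strongly $\circ$-monoidal structure. The main obstacle is purely bookkeeping, verifying this coherence; one can instead bypass it conceptually by noting that $\freesym{A\amalg B} \simeq \freesym{A} \amalg \freesym{B}$ as symmetric monoidal $1$-categories, so by Lemma \ref{lemma:dayconequiv} we get $\Fun(\freesym{A\amalg B}, \V) \simeq \Fun(\freesym{A}, \V) \otimes_{\V} \Fun(\freesym{B}, \V)$ in $\CAlg(\PrlV)$, under which external tensoring of endofunctors corresponds to $\oplus$ on symmetric sequences. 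This identifies $\oplus$ with a coherent monoidal map between monoidal endomorphism categories, yielding the strong $\circ$-monoidality automatically.
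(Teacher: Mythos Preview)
Your proof is correct and follows essentially the same approach as the paper: evaluate both sides of $(S_A \oplus S_B) \circ (T_A \oplus T_B) \simeq (S_A \circ T_A) \oplus (S_B \circ T_B)$ on objects of $\freesym{A\amalg B}\times(A\amalg B)$ using the explicit circle-product formula, observe that the extension-by-zero forces all labels into a single summand, and appeal to naturality of colimit identifications for coherence. Your added unit check and the conceptual alternative via $\freesym{A\amalg B}$ as the coproduct in symmetric monoidal categories are nice embellishments not present in the paper's proof.
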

\begin{proof}
Suppose we're given $S, S' \in \Sseq_A$ and $T, T' \in \Sseq_B$. Then for any $a_1,\dots, a_n, a \in A$, we have
\[
(S \circ S') \oplus (T \circ T')(\{a_1, \dots, a_n; a\}) \simeq S \circ S' (\{a_1, \dots, a_n; a\})
\]
by calculating the extension by zero. Similarly, one gets the same answer by calculating 
\[
(S \oplus T) \circ (S' \oplus T') (\{a_1, \dots, a_n; a\})\simeq S \circ S'(\{a_1, \dots, a_n; a\}).
\]
The same situation happens with $b_1, \dots, b_n, b \in B$. For any mixed term with $a_i \in A$ and $b_j \in B$ simultaneously, both $(S \circ S') \oplus (T \circ T')$ and $(S \oplus T) \circ (S' \oplus T')$ just evaluate to zero. Since these equivalences are equivalences of colimits, they are natural and give an equivalence 
\[
(S \circ S') \oplus (T \circ T') \simeq (S \oplus T) \circ (S' \oplus T').
\]
\end{proof}

This also proves:
\begin{corollary}
    The restriction functor $\res$ as constructed in (\ref{constr:sum}) is both lax and colax $\circ$-monoidal.
\end{corollary}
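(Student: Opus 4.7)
The plan is to combine two ingredients already in hand: the preceding lemma shows that $\oplus$ is strongly $\circ$-monoidal, while Construction~\ref{constr:sum} observes that $\res$ is simultaneously a left and a right adjoint to $\oplus$. The general principle to invoke is that an adjoint to a strongly monoidal functor inherits a lax or colax monoidal structure, depending on which side of the adjunction one sits.

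For the lax structure on $\res$, I would apply the paper's earlier lemma on right adjoints to strongly monoidal functors. Applied to the adjunction $\oplus \dashv \res$ with $\oplus$ strongly $\circ$-monoidal, this endows $\res$ with a natural lax $\circ$-monoidal structure whose lax constraint $\res(X) \circ \res(Y) \to \res(X \circ Y)$ is obtained by composing the unit of the adjunction with the inverse of the monoidality isomorphism of $\oplus$, followed by the counit, in the standard mates formalism.

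For the colax structure, I would dualize the previous argument: a left adjoint to a strongly monoidal functor carries a canonical colax structure. One may either pass to opposite categories using the fiberwise-opposite conventions mentioned following Definition~\ref{def:lax}, or simply repeat the explicit $\LM$-operadic construction from the earlier lemma with the roles of unit and counit swapped, applied to the adjunction $\res \dashv \oplus$. This produces a colax constraint $\res(X \circ Y) \to \res(X) \circ \res(Y)$ together with the required higher coherence data.

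I do not anticipate a substantive obstacle: the only step requiring care is the formal dualization to the colax case, and this reduces cleanly to the right-adjoint lemma after passing to opposites. A minor but convenient feature of the present setup is that the two adjoints of $\oplus$ coincide on the nose, since the left and right Kan extensions along the inclusion $(\freesym{A}\times A)\amalg(\freesym{B}\times B) \hookrightarrow \freesym{A\amalg B}\times(A\amalg B)$ agree (extension-by-zero), so no issue arises from reconciling two genuinely distinct adjoints.
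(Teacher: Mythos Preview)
Your proposal is correct and matches the paper's intended reasoning: the paper states the corollary immediately after the lemma with the preface ``This also proves,'' leaving the deduction implicit, and your argument (strong monoidality of $\oplus$ together with $\res$ being both left and right adjoint yields lax and colax structures) is precisely the omitted reasoning. The only minor point is that the paper's earlier lemma is phrased for $\A$-tensored categories rather than monoidal categories per se, but the same mates argument applies to monoidal functors, so this is a cosmetic mismatch rather than a gap.
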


Hence, we have the following adjunction for operads:
    \begin{equation}\label{eqn:opsum}
    \begin{tikzcd}
	\Op_A \times \Op_B & \Op_{A \amalg B}
	\arrow["\oplus"{name=0}, shift left=2, from=1-1, to=1-2]
	\arrow["\res"{name=1},shift left=2, from=1-2, to=1-1]
    \arrow["\dashv"{anchor=center, rotate=-90}, draw=none, from=0, to=1]
    \end{tikzcd}
    \end{equation}
and the following for cooperads:
\begin{equation}\label{eqn:coopsum}
    \begin{tikzcd}
	\CoOp_{A \amalg B} & \CoOp_A \times \CoOp_B
	\arrow["\res"{name=0}, shift left=2, from=1-1, to=1-2]
	\arrow["\oplus"{name=1},shift left=2, from=1-2, to=1-1]
    \arrow["\dashv"{anchor=center, rotate=-90}, draw=none, from=0, to=1]
    \end{tikzcd}
    \end{equation}
\begin{remark}\label{rmk:sum}
    Note that coalgebras over $Q \oplus Q'$ for cooperads $Q \in \CoOp_A$, $Q' \in \CoOp_B$ consist of a pair of a $Q$-coalgebra and a $Q'$-coalgebra. Indeed we see that 
    \[
    \Coalg_{Q \oplus Q'} \simeq \Coalg_Q \times \Coalg_{Q'},
    \] and an analoguous statement is true for operads and algebras.
\end{remark}

\subsection{(Co)modules as a fibration}
Recall the situation in commutative rings, where we have a bifibration $\Md \to \Rings$ such that over each ring $R$, the fiber is $\Md_R$. We now prove this result for our $\Md_P$ and $\Comod_Q$ defined over  cooperads.

To do so, we'll need the following lemma:

\begin{lemma}\label{lemma:bifibration}
Given functors $\pi: \C \to \D$ and $l: \D \to \C$ such that $l$ is fully faithful left adjoint to $\pi$. Let $\C$ have pushouts, and let $\pi$ preserve pushouts. Then $p$ is a coCartesian fibration.
\end{lemma}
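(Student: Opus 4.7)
The plan is to construct coCartesian lifts explicitly using the pushout hypothesis. Given $c \in \C$ and a morphism $f \colon \pi(c) \to d$ in $\D$, I will build the candidate lift by forming the pushout
\[
c' \;:=\; c \sqcup_{l\pi(c)} l(d)
\]
in $\C$, where the map $l\pi(c) \to c$ is the counit $\varepsilon_c$ of the adjunction and $l\pi(c) \to l(d)$ is $l(f)$. The natural map $c \to c'$ in the pushout square will be the candidate for a $\pi$-coCartesian lift of $f$ starting at $c$.

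The first verification is that $c \to c'$ indeed lies over $f$. Here I will use that $\pi$ preserves pushouts, together with the key consequence of the hypothesis ``$l$ fully faithful left adjoint to $\pi$'', namely that the unit $\eta \colon \id_\D \xrightarrow{\sim} \pi l$ is an equivalence. Applying $\pi$ to the pushout square collapses $\pi l \pi(c) \to \pi(c)$ to an equivalence, so $\pi(c') \simeq \pi(c) \sqcup_{\pi(c)} d \simeq d$, and the induced map $\pi(c) \to \pi(c')$ is identified with $f$.

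To check the coCartesian property, I will directly compute mapping spaces out of $c'$. For any $c'' \in \C$, the pushout presentation of $c'$ gives
\[
\Hom_\C(c', c'') \;\simeq\; \Hom_\C(c, c'') \times_{\Hom_\C(l\pi(c), c'')} \Hom_\C(l(d), c'').
\]
Applying the adjunction $l \dashv \pi$ to the two corner terms yields $\Hom_\D(\pi(c), \pi(c''))$ and $\Hom_\D(d, \pi(c''))$ respectively, and under the identification $\pi(c') \simeq d$ established in the previous step this becomes exactly
\[
\Hom_\C(c, c'') \times_{\Hom_\D(\pi(c), \pi(c''))} \Hom_\D(\pi(c'), \pi(c'')),
\]
which is the defining property of $c \to c'$ being $\pi$-coCartesian over $f$.

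Finally, since coCartesian lifts exist at every object over every morphism, $\pi$ is a coCartesian fibration. The main subtle point in this plan is the identification of the adjoint mapping spaces with the right factors of the fiber product in the third step; everything else is formal manipulation of pushouts and the fully faithfulness equivalence $\pi l \simeq \id_\D$. No routine computation beyond tracing these equivalences is required.
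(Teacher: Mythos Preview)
Your proposal is correct and follows essentially the same approach as the paper: construct the candidate lift as the pushout $c' = c \sqcup_{l\pi(c)} l(d)$ along the counit, use that $\pi$ preserves pushouts and $\eta$ is an equivalence to see the lift lies over $f$, and then verify the coCartesian mapping-space pullback via the universal property of the pushout together with the adjunction $l \dashv \pi$. The paper carries out exactly this argument, with somewhat more explicit bookkeeping of the $2$-cells to confirm that the resulting pullback square agrees with the one in the definition of a coCartesian edge; your ``main subtle point'' about matching the adjoint mapping spaces with the correct legs of the fiber product is precisely where that bookkeeping enters.
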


\begin{proof}
    We notice that to check that $\pi$ is a coCartesian fibration, we must check that given $c \in \C$ and $f: \pi c = d \to d'$ in $\D$, there exists $\hat{f}: c \to c'$ in $\C$ lifting $f$ such that given any $c'' \in C$ with $d'' = \pi c''$, the following square 
\[\begin{tikzcd}
	{\C(c', c'')} & {\C(c, c'')} \\
	{\D(d', d'')} & {\D(d, d'')}
	\arrow["{\hat{f}^\ast}", from=1-1, to=1-2]
	\arrow["\pi", from=1-2, to=2-2]
	\arrow["\pi", from=1-1, to=2-1]
	\arrow["{f^\ast}", from=2-1, to=2-2]
	\arrow["\lrcorner"{anchor=center, pos=0.125}, draw=none, from=1-1, to=2-2]
\end{tikzcd}\]
is a pullback. The $2$-cell showing commutativity here is from the functoriality of $\pi$. 

To do so, we first construct $\hat{f}$ as the following pullback
\begin{equation}\label{eqn:hatf_defn}
\begin{tikzcd}
	ld & {ld'} \\
	c & {c'}
	\arrow["lf", from=1-1, to=1-2]
	\arrow["{\epsilon_c}", from=1-1, to=2-1]
	\arrow[from=1-2, to=2-2]
	\arrow["{\hat{f}}"', from=2-1, to=2-2]
	\arrow["\lrcorner"{anchor=center, pos=0.125, rotate=180}, draw=none, from=2-2, to=1-1]
\end{tikzcd}
\end{equation}
where $\epsilon_c$ is the counit on $c$ (we're using $ld = l\pi c$).

Then we first check that $\pi \hat{f} \simeq f$ by taking $\pi$ of the diagram \ref{eqn:hatf_defn}. Then we get the following pushout (as $\pi$ preserves pushouts)

\[\begin{tikzcd}
	d & {d'} \\
	{\pi ld} & {\pi ld'} \\
	{\pi c} & {\pi c'}
	\arrow["{\pi lf}", from=2-1, to=2-2]
	\arrow["{\pi\epsilon_c}", from=2-1, to=3-1]
	\arrow[from=2-2, to=3-2]
	\arrow["{\pi\hat{f}}"', from=3-1, to=3-2]
	\arrow["\lrcorner"{anchor=center, pos=0.125, rotate=180}, draw=none, from=3-2, to=2-1]
	\arrow["{\eta_d}", from=1-1, to=2-1]
	\arrow["{\eta_{d'}}", from=1-2, to=2-2]
	\arrow["f", from=1-1, to=1-2]
\end{tikzcd}\]
where $\eta$ is the unit. Note that the left composite $\epsilon_c \eta_d \simeq 1_c$ by the triangle identity (recall $d = \pi c$). These units are also equivalences since $l$ is fully faithful. Hence the outer square is also a pullback, and thus we have
\begin{equation}\label{eqn:2cell}
\begin{tikzcd}
	d & {d'} \\
	{\pi ld} & {\pi ld'} \\
	{\pi c} & {\pi c'}
	\arrow["{\pi lf}", from=2-1, to=2-2]
	\arrow["{\pi\epsilon_c}", from=2-1, to=3-1]
	\arrow[from=2-2, to=3-2]
	\arrow["{\pi\hat{f}}"', from=3-1, to=3-2]
	\arrow["\lrcorner"{anchor=center, pos=0.125, rotate=180}, draw=none, from=3-2, to=2-1]
	\arrow["{\eta_d}", from=1-1, to=2-1]
	\arrow["{\eta_{d'}}", from=1-2, to=2-2]
	\arrow["f", from=1-1, to=1-2]
\end{tikzcd}
\simeq
\begin{tikzcd}
	d & {d'} \\
	\pi c & {\pi c'}
	\arrow["f", from=1-1, to=1-2]
	\arrow[equal, from=1-1, to=2-1]
	\arrow[equal, from=1-2, to=2-2]
	\arrow["{f}"', from=2-1, to=2-2]
\end{tikzcd}
\end{equation} where we're using that the pushout of an identity is another identity. The $2$-cell filling the square here is the identity of $f$. Hence we've shown that $\pi \hat{f} \simeq f$ as required.

Next we show that the pullback condition is fulfilled. Suppose we're given $c'' \in \C$ with $d'' = \pi c''$. Then by definition of $c'$ as a pullback, we have the following diagram
\begin{equation}\label{eqn:composite2cell}
\begin{tikzcd}
	{\C(c', c'')} & {\C(c, c'')} \\
	{\C(ld', c'')} & {\C(ld, c'')} \\
	{\D(d', d'')} & {\D(d, d'')}.
	\arrow["{\hat{f}^\ast}", from=1-1, to=1-2]
	\arrow[from=1-2, to=2-2]
	\arrow[from=1-1, to=2-1]
	\arrow[from=2-1, to=2-2]
	\arrow["\lrcorner"{anchor=center, pos=0.125}, draw=none, from=1-1, to=2-2]
	\arrow["\simeq", from=2-1, to=3-1]
	\arrow["\simeq", from=2-2, to=3-2]
	\arrow["{f^\ast}"', from=3-1, to=3-2]
\end{tikzcd}
\end{equation}
Notice that the equivalences on the bottom square come from the adjunction. Since the bottom square has equivalences as legs, the outer square is also a pullback. 

Now the outer composite square composes exactly to 
\[\begin{tikzcd}
	{\C(c', c'')} & {\C(c, c'')} \\
	{\D(d', d'')} & {\D(d, d'')}
	\arrow["{\hat{f}^\ast}", from=1-1, to=1-2]
	\arrow["\pi", from=1-2, to=2-2]
	\arrow["\pi", from=1-1, to=2-1]
	\arrow["{f^\ast}", from=2-1, to=2-2]
	\arrow["\lrcorner"{anchor=center, pos=0.125}, draw=none, from=1-1, to=2-2]
\end{tikzcd}\]
which is compatible with the $2$-cells. This follows from the equation \ref{eqn:2cell} and the functoriality of $\pi$ on $2$-cells. We argue briefly as follows. The diagram \ref{eqn:composite2cell} factors as follows:
\[\begin{tikzcd}
	{\C(c',c'')} & {\C(c,c'')} \\
	{\C(ld',c'')} & {\C(ld'', c'')} \\
	{\D(\pi ld', d'')} & {\D(\pi ld, d'')} \\
	{\D(d', d'')} & {\D(d, d'')}
	\arrow["{lf^\ast}", from=2-1, to=2-2]
	\arrow["\pi", from=2-1, to=3-1]
	\arrow["\pi", from=2-2, to=3-2]
	\arrow["{\pi l f^\ast}", from=3-1, to=3-2]
	\arrow["{\eta_{d'}^\ast}", from=3-1, to=4-1]
	\arrow["{\eta_{d}^\ast}", from=3-2, to=4-2]
	\arrow["{f^\ast}"', from=4-1, to=4-2]
	\arrow[from=1-1, to=2-1]
	\arrow[from=1-1, to=1-2]
	\arrow[from=1-2, to=2-2]
	\arrow["{\C(h,c'')}"{description}, Rightarrow, from=1-2, to=2-1]
\end{tikzcd}\]
where the upper square comes from $\C(-, c'')$ applied to the $2$-cell $h$ in the definition of $c'$ as a pullback. One can commute the middel $2$-cell with $\C(h, c'')$, using the functoriality of $\pi$ to get 
\[\begin{tikzcd}
	{\C(c',c'')} & {\C(c,c'')} \\
	{\D(d', d'')} & {\D(d, d'')} \\
	{\D(\pi ld', d'')} & {\D(\pi ld, d'')} \\
	{\D(d', d'')} & {\D(d, d'')}.
	\arrow["{lf^\ast}", from=2-1, to=2-2]
	\arrow[from=2-1, to=3-1]
	\arrow[from=2-2, to=3-2]
	\arrow["{\pi l f^\ast}"', from=3-1, to=3-2]
	\arrow["{\eta_{d'}^\ast}", from=3-1, to=4-1]
	\arrow["{\eta_{d}^\ast}", from=3-2, to=4-2]
	\arrow["{f^\ast}"', from=4-1, to=4-2]
	\arrow["\pi", from=1-1, to=2-1]
	\arrow[from=1-1, to=1-2]
	\arrow["\pi", from=1-2, to=2-2]
	\arrow["{\D(\pi h, d'')}"{description}, Rightarrow, from=2-2, to=3-1]
\end{tikzcd}\]
Now the middle and bottom square then cancels because of \ref{eqn:2cell}. Thus we left with the upper $2$-cell which is given by functoriality of $\pi$, as required. 
\end{proof}

Now we can define the map $p: \Comod_Q \to \Coalg_Q$. Then we'll use the above lemma to show it is a coCartesian and cartesian fibration.

\begin{construction}[Fibration of comodules]\label{constr:modfib}
Fix a set $A$, $\V \in \CAlg(\Prlst)$, and $\C \in \CAlg(\PrlstV)$ following \ref{notation:AVC2}. Suppose we're given a cooperad $Q$. We use the map of cooperads $\Md_Q \to Q \oplus \Triv$ which is given by the universal property of $Q \oplus \Triv$: note that $\res(\Md_Q)$ is exactly $(Q, \Triv)$ (see \ref{constr:sum}). 

Hence we have a natural map $\Md_Q \to Q \oplus \Triv$, inducing a map of coalgebras (the left adjoint of \ref{eqn:coalgadj})
\begin{equation}\label{eqn:mod2coalg1}
\Comod_Q \to \Coalg_Q \times \Fun(dA, \C).
\end{equation}
We can project to $\Coalg_Q$, thus getting the required map
\[
p: \Comod_Q \to \Coalg_Q.
\]

Another perspective on this construction can be given as follows: let $\Sseq_A$ act on $\Fun(A \amalg dA, \C)$ by $S \mapsto (\Md_S \circ {-})$. Then $\Fun(A \amalg dA, \C)$ and $\Fun(A, \C)$ are both categories with $\Sseq_A$ action. Notice that $\pi_1: \Fun(A \amalg dA, \C) \to \Fun(A, \C)$, the functor only remembering the first $A$ variables, is strongly $\Sseq_A$-monoidal. In other words, we have 
\[
\alpha: \pi_1 (\Md_S \circ {-}) \simeq S \circ {\pi_1{-}}
\]
which is natural in $S$ and respects the composition product. Hence given a cooperad $Q$, we get an induced map on coalgebras 
\[
p: \Comod_Q \to \Coalg_Q
\]
which lifts this map $\pi_1$ over the forgetful functors. Notice also that $\pi_1$ has both a left and right adjoint $(-,0): \Fun(A, \C) \to \Fun(A \amalg dA, \C)$. Since $\pi_1$ is strongly $\Sseq_A$-monoidal (so the lax/colax morphism are equivalences), its right adjoint $(-,0)$ is lax and colax $\Sseq_A$-monoidal \ref{def:lax}. Hence, it lifts to a right adjoint 
\[
r: \Coalg_Q \to \Comod_Q
\] over the forgetful functors.
\end{construction}

\begin{remark}\label{remark:pcofree}
    Notice that by how $p, r$ are defined in \ref{constr:modfib}, we have clearly that $p \Cofree_{\Md_Q} \simeq \Cofree_Q \pi_1$ lifting the natural strong $\Sseq_A$-monoidality $\pi_1$, ie the map $\pi_1 \Md_Q \circ{-} \to Q \circ {\pi_1{-}}$.  
\end{remark}
By taking fibers of $p$, we can now discuss comodules over a coalgebra.
\begin{definition}
Fix a set $A$, $\V \in \CAlg(\Prlst)$, and $\C \in \CAlg(\PrlstV)$ following \ref{notation:AVC2}. Suppose we're given a cooperad $Q$ along with a coalgebra $C$. Then we define $\Comod_C$ to be the fiber of $p: \Comod_Q \to \Coalg_Q$ over $C$. This is called the the {\it category of comodules over $C$}.
\end{definition}

Next we show that $p: \Comod_Q \to \Coalg_Q$ is indeed a bifibration.
\begin{lemma}
    Fix a set $A$, $\V \in \CAlg(\Prlst)$, and $\C \in \CAlg(\PrlstV)$ following \ref{notation:AVC2}. Suppose we're given a cooperad $Q$. Then the map $p: \Comod_Q \to \Coalg_Q$ as constructed in (\ref{constr:modfib}) is both a cartesian and coCartesian fibration.
\end{lemma}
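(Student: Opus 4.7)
My plan is to reduce the claim to two applications of Lemma \ref{lemma:bifibration} and its evident dual (obtained by replacing ``left adjoint'' with ``right adjoint'', ``pushouts'' with ``pullbacks'', and ``coCartesian'' with ``Cartesian''). For this I must produce a fully faithful left adjoint and a fully faithful right adjoint to $p$, and verify that $p$ preserves both pushouts and pullbacks.

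The right adjoint $r: \Coalg_Q \to \Comod_Q$ is already given by Construction \ref{constr:modfib}. My first step is to observe that the underlying functor $(-, 0): \Fun(A, \C) \to \Fun(A \amalg dA, \C)$ is in fact biadjoint to $\pi_1$: it serves as both the left and the right adjoint because the zero object of $\C$ is simultaneously initial and terminal in the stable setting. Moreover $(-, 0)$ is strongly $\Sseq_A$-monoidal, since one computes directly that $\Md_S \circ (c, 0) \simeq (S \circ c, 0)$ for every symmetric sequence $S$. Lifting the ``other'' adjunction $(-, 0) \dashv \pi_1$ to coalgebras then produces a left adjoint $l: \Coalg_Q \to \Comod_Q$ to $p$, whose underlying functor coincides with that of $r$.

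Both $l$ and $r$ are fully faithful: the equivalence $\pi_1 \circ (-, 0) \simeq \id$ lifts to $p \circ l \simeq p \circ r \simeq \id$. Because $p$ admits both a left and a right adjoint, it preserves all small limits and colimits, hence in particular pullbacks and pushouts, which exist in $\Comod_Q$ and $\Coalg_Q$ by presentability. I then apply Lemma \ref{lemma:bifibration} to the pair $(p, l)$ to conclude that $p$ is coCartesian, and its Cartesian dual to the pair $(p, r)$ to conclude that $p$ is Cartesian, giving the desired bifibration.

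The main subtlety is justifying that the underlying biadjunction $(-, 0) \dashv \pi_1 \dashv (-, 0)$ really does lift to a biadjunction $l \dashv p \dashv r$ on coalgebras. The key input is that both functors are strongly $\Sseq_A$-monoidal, which guarantees that the units and counits of both adjunctions are strongly monoidal natural transformations, hence compatible with the colax structures used to lift functors to coalgebras via Definition \ref{def:lax}.
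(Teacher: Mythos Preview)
Your overall strategy is sound and in fact more conceptual than the paper's. The paper does \emph{not} lift both adjunctions directly: it obtains the right adjoint $r$ formally (as the right adjoint of a composite of left adjoints coming from the cooperad map $\Md_Q \to Q \oplus \Triv$), and then proves by hand that this same $r$ is also a \emph{left} adjoint via a hom-set computation. That computation requires showing that $p$ preserves $U_{\Md_Q}$-split totalizations, reducing to cofree objects, and finally invoking the underlying biadjunction $(-,0)\dashv \pi_1$. Your route bypasses all of that by exploiting the extra observation that $(-,0)$ is itself strongly $\Sseq_A$-monoidal, which the paper never states explicitly.

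There is, however, a genuine gap in your last paragraph. The assertion that ``both functors are strongly $\Sseq_A$-monoidal, which guarantees that the units and counits of both adjunctions are strongly monoidal natural transformations'' is not true in general: two functors can each carry strong monoidal structures and be plain adjoints without the adjunction being monoidal (consider the identity functor equipped with two inequivalent strong monoidal structures on a category with nontrivial monoidal autoequivalences). What you actually need is either (a) to verify directly that the specific unit $(c,m)\to (c,0)$ and counit $(c,0)\to (c,m)$ intertwine the strong structures --- an easy check here since both are of the form $(\id,0)$ --- or, more cleanly, (b) to apply the comodule-dual of the paper's lemma on right adjoints of strong monoidal functors \emph{twice}: once with $\pi_1$ as the strong monoidal functor (yielding $l\dashv p$), and once with $(-,0)$ as the strong monoidal functor (yielding $p\dashv r$, since $\pi_1$ is now the left adjoint). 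In the second application you must also check that the colax structure on $\pi_1$ induced from being left adjoint to the strong monoidal $(-,0)$ agrees with its given strong structure, so that the resulting left adjoint really is $p$; this is again immediate from the formula $\Md_S\circ(c,0)\simeq(S\circ c,0)$. Once these checks are made, your argument goes through and is shorter than the paper's.

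A minor further point: for full faithfulness you assert $p\circ l\simeq p\circ r\simeq\id$, but what you need is that the \emph{unit} of $l\dashv p$ and the \emph{counit} of $p\dashv r$ are equivalences. This follows because they lift the corresponding unit and counit of the underlying biadjunction (which are identities) and the forgetful functors are conservative --- exactly the argument the paper gives for $r$ --- but it should be stated rather than conflated with the composite being the identity.
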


\begin{proof}
    We show this by verifying the conditions of (\ref{lemma:bifibration}). Thus we now show that $p$ has a left and right adjoint, both of which are intuitively equal to the map $C \mapsto (C, 0)$. The fact that the right adjoint exists is a formality because both the projection and the map \eqref{eqn:mod2coalg1} have right adjoints. We use $r$ to denote the right adjoint of $p$. Notice that $r$ can be calculated on cofree algebras using the following diagram:

\[\begin{tikzcd}
	{\Comod_Q} & {\Coalg_Q \times \Fun(dA, \C)} & {\Coalg_Q} \\
	{\Fun(A \amalg dA, \C)} & {\Fun(A \amalg dA, \C)} & \Fun(A, \C).
	\arrow[""{name=0, anchor=center, inner sep=0}, "U", shift left=2, from=1-1, to=1-2]
	\arrow[""{name=1, anchor=center, inner sep=0}, "{\widehat{\pi_1}}", shift left=2, from=1-2, to=1-3]
	\arrow[""{name=2, anchor=center, inner sep=0}, "{U_Q}", shift left=2, from=1-3, to=2-3]
	\arrow[""{name=3, anchor=center, inner sep=0}, shift left=2, from=2-3, to=1-3]
	\arrow[""{name=4, anchor=center, inner sep=0}, "{\widehat{(-, 0)}}", shift left=2, from=1-3, to=1-2]
	\arrow[""{name=5, anchor=center, inner sep=0}, shift left=2, from=1-2, to=1-1]
	\arrow[""{name=6, anchor=center, inner sep=0}, "{U_{Q \oplus \Triv}}", shift left=2, from=1-2, to=2-2]
	\arrow[""{name=7, anchor=center, inner sep=0}, shift left=2, from=2-2, to=1-2]
	\arrow[""{name=8, anchor=center, inner sep=0}, "{\pi_1}", shift left=2, from=2-2, to=2-3]
	\arrow[""{name=9, anchor=center, inner sep=0}, "{(-, 0)}", shift left=2, from=2-3, to=2-2]
	\arrow["{=}"', <->, from=2-2, to=2-1]
	\arrow[""{name=10, anchor=center, inner sep=0}, shift left=2, from=2-1, to=1-1]
	\arrow[""{name=11, anchor=center, inner sep=0}, "{U_{\Md_Q}}", shift left=2, from=1-1, to=2-1]
	\arrow["\dashv"{anchor=center, rotate=-180}, draw=none, from=11, to=10]
	\arrow["\dashv"{anchor=center, rotate=-180}, draw=none, from=6, to=7]
	\arrow["\dashv"{anchor=center, rotate=-90}, draw=none, from=8, to=9]
	\arrow["\dashv"{anchor=center, rotate=-90}, draw=none, from=1, to=4]
	\arrow["\dashv"{anchor=center, rotate=-90}, draw=none, from=0, to=5]
	\arrow["\dashv"{anchor=center, rotate=-180}, draw=none, from=2, to=3]
\end{tikzcd}\]
Here the vertical adjunctions are forgetful/cofree adjunctions, and the horizontal ones calculate the map $p: \Comod_Q \to \Coalg_Q$. We note that $\Fun(A \amalg dA, \C) \simeq \Fun(A, \C) \times \Fun(dA, \C)$, hence our notation for $\pi_1$.

Now notice that on a cofree algebra $\Cofree(V)$, $r$ must take
\[
\Cofree_Q(V) \mapsto \Cofree_{\Md_Q}(V, 0) \simeq (\Cofree_Q(V), 0),
\]
and so by taking cobar resolutions via the cofree functor, we see that a coalgebra $C$ is taken to $(C, 0)$. So far everything was a formal consequence of categorical definitions. 

Next we want to show that $r = ({-},0): \Coalg_Q \to \Comod_Q$ is also a left adjoint to $p: \Comod_Q \to \Coalg_Q$. For every $\mathbf{D} = (D, M)$, we need to show the following on homs:
\begin{equation}\label{eqn:hom1}
\Hom_{\Comod_Q}(rC, (D, M)) \simeq \Hom_{\Coalg_Q}(C, p(D,M)).
\end{equation}
We then attempt to take cofree resolutions of $(D, M)$. In order for this to work, we must show that $p$ preserves $U_{\Md_Q}$-split resolutions. This is pretty easy to show: let 
\[\begin{tikzcd}
	{\mathbf{D}} & {\mathbf{D}^0} & {\mathbf{D}^1} & \dots & {\mathbf{D}^n} & \dots
	\arrow[from=1-1, to=1-2]
	\arrow[shift right, from=1-2, to=1-3]
	\arrow[shift left, from=1-2, to=1-3]
	\arrow[shift left=2, from=1-3, to=1-4]
	\arrow[shift right=2, from=1-3, to=1-4]
	\arrow[from=1-3, to=1-4]
	\arrow[""{name=0, anchor=center, inner sep=0}, shift right=3, from=1-4, to=1-5]
	\arrow[""{name=1, anchor=center, inner sep=0}, shift left=3, from=1-4, to=1-5]
	\arrow[""{name=2, anchor=center, inner sep=0}, shift left=3, from=1-5, to=1-6]
	\arrow[""{name=3, anchor=center, inner sep=0}, shift right=3, from=1-5, to=1-6]
	\arrow[shorten <=2pt, shorten >=2pt, phantom, "\dots"{rotate=90}, from=1, to=0]
	\arrow[shorten <=2pt, shorten >=2pt, phantom, "\dots"{rotate=90}, from=2, to=3]
\end{tikzcd}\]
be a $U_{\Md_Q}$-split totalization of $\mathbf{D} = (D, M)$. For brevity, we'll denote this totalization as 
\[
\mathbf{D} \simeq \Tot \mathbf{D}^\bullet.
\]
We want to check that 
\[\begin{tikzcd}
	{p \mathbf{D}} & {p \mathbf{D}^0} & {p \mathbf{D}^1} & \dots & {p \mathbf{D}^n} & \dots
	\arrow[from=1-1, to=1-2]
	\arrow[shift right, from=1-2, to=1-3]
	\arrow[shift left, from=1-2, to=1-3]
	\arrow[shift left=2, from=1-3, to=1-4]
	\arrow[shift right=2, from=1-3, to=1-4]
	\arrow[from=1-3, to=1-4]
	\arrow[""{name=0, anchor=center, inner sep=0}, shift right=3, from=1-4, to=1-5]
	\arrow[""{name=1, anchor=center, inner sep=0}, shift left=3, from=1-4, to=1-5]
	\arrow[""{name=2, anchor=center, inner sep=0}, shift left=3, from=1-5, to=1-6]
	\arrow[""{name=3, anchor=center, inner sep=0}, shift right=3, from=1-5, to=1-6]
	\arrow[shorten <=2pt, shorten >=2pt, phantom, "\dots"{rotate=90}, from=1, to=0]
	\arrow[shorten <=2pt, shorten >=2pt, phantom, "\dots"{rotate=90}, from=2, to=3]
\end{tikzcd}\]
is a $U_Q$-split totalization. To do so, let's apply $U_Q p \simeq  \pi_1 U_{\Md_Q}$ to $\Tot \mathbf{D}^\bullet$. Notice that  
\[\begin{tikzcd}
	{U_{\Md_Q} \mathbf{D}} & {U_{\Md_Q} \mathbf{D}^0} & {U_{\Md_Q} \mathbf{D}^1} & \dots & {U_{\Md_Q} \mathbf{D}^n} & \dots
	\arrow[from=1-1, to=1-2]
	\arrow[shift right, from=1-2, to=1-3]
	\arrow[shift left, from=1-2, to=1-3]
	\arrow[shift left=2, from=1-3, to=1-4]
	\arrow[shift right=2, from=1-3, to=1-4]
	\arrow[from=1-3, to=1-4]
	\arrow[""{name=0, anchor=center, inner sep=0}, shift right=3, from=1-4, to=1-5]
	\arrow[""{name=1, anchor=center, inner sep=0}, shift left=3, from=1-4, to=1-5]
	\arrow[""{name=2, anchor=center, inner sep=0}, shift left=3, from=1-5, to=1-6]
	\arrow[""{name=3, anchor=center, inner sep=0}, shift right=3, from=1-5, to=1-6]
	\arrow[shorten <=2pt, shorten >=2pt, phantom, "\dots"{rotate=90}, from=1, to=0]
	\arrow[shorten <=2pt, shorten >=2pt, phantom, "\dots"{rotate=90}, from=2, to=3]
\end{tikzcd}\]
is a split totalization by definition. Hence after applying $\pi_1$ of this, we also get a split totalization! Here we are also using that $\pi_1$ has both a left and right adjoint equal to $(-, 0)$, hence preserves all limits. Hence 
\[\begin{tikzcd}
	{U_Q p \mathbf{D}} & {U_Q p \mathbf{D}^0} & {U_Q p \mathbf{D}^1} & \dots & {U_Q p \mathbf{D}^n} & \dots
	\arrow[from=1-1, to=1-2]
	\arrow[shift right, from=1-2, to=1-3]
	\arrow[shift left, from=1-2, to=1-3]
	\arrow[shift left=2, from=1-3, to=1-4]
	\arrow[shift right=2, from=1-3, to=1-4]
	\arrow[from=1-3, to=1-4]
	\arrow[""{name=0, anchor=center, inner sep=0}, shift right=3, from=1-4, to=1-5]
	\arrow[""{name=1, anchor=center, inner sep=0}, shift left=3, from=1-4, to=1-5]
	\arrow[""{name=2, anchor=center, inner sep=0}, shift left=3, from=1-5, to=1-6]
	\arrow[""{name=3, anchor=center, inner sep=0}, shift right=3, from=1-5, to=1-6]
	\arrow[shorten <=2pt, shorten >=2pt, phantom, "\dots"{rotate=90}, from=1, to=0]
	\arrow[shorten <=2pt, shorten >=2pt, phantom, "\dots"{rotate=90}, from=2, to=3]
\end{tikzcd}\]
is equivalent to 
\[\begin{tikzcd}
	{\pi_1 U_{\Md_Q} \mathbf{D}} & {\pi_1 U_{\Md_Q} \mathbf{D}^0} & {\pi_1 U_{\Md_Q} \mathbf{D}^1} & \dots & {\pi_1 U_{\Md_Q} \mathbf{D}^n} & \dots
	\arrow[from=1-1, to=1-2]
	\arrow[shift right, from=1-2, to=1-3]
	\arrow[shift left, from=1-2, to=1-3]
	\arrow[shift left=2, from=1-3, to=1-4]
	\arrow[shift right=2, from=1-3, to=1-4]
	\arrow[from=1-3, to=1-4]
	\arrow[""{name=0, anchor=center, inner sep=0}, shift right=3, from=1-4, to=1-5]
	\arrow[""{name=1, anchor=center, inner sep=0}, shift left=3, from=1-4, to=1-5]
	\arrow[""{name=2, anchor=center, inner sep=0}, shift left=3, from=1-5, to=1-6]
	\arrow[""{name=3, anchor=center, inner sep=0}, shift right=3, from=1-5, to=1-6]
	\arrow[shorten <=2pt, shorten >=2pt, phantom, "\dots"{rotate=90}, from=1, to=0]
	\arrow[shorten <=2pt, shorten >=2pt, phantom, "\dots"{rotate=90}, from=2, to=3]
\end{tikzcd}\] which is a split totalization, as required! Hence $p$ preserves $U_{\Md_Q}$-split totalization, and in particular $\Md_Q$-cofree resolutions. 

So we can reduce to a cofree comodule $\mathbf{D} = (D, M) = \Cofree_{\Md_Q}(V, W)$. Using the above as well as \ref{remark:pcofree}, we can reduce to showing
\begin{equation}\label{eqn:hom2}
\Hom_{\Comod_Q}(rC, \Cofree_{\Md_Q}(V, W)) \simeq \Hom_{\Coalg_Q}(C,p \Cofree_{\Md_Q}(V, W) = \Cofree_Q(V)).
\end{equation}
By using forgetful/cofree adjunctions, we can further reduce to 
\begin{equation}\label{eqn:hom3}
\Hom_{\Fun(A \amalg dA, \C)}((UC, 0), (V, W)) \simeq \Hom_{\Fun(A, \C)}(UC, V)
\end{equation}
which follows because $(-, 0): \Fun(A, \C) \to \Fun(A \amalg dA, \C)$ is also left adjoint to $\pi_1: \Fun(A \amalg dA, \C) \to \Fun(A, \C)$. 

Lastly, we must check that $r$ is fully faithful. To do so, we check that the counit $\varepsilon: pr \to 1_{\Coalg_Q}$ is an equivalence. Notice that by the construction of $p$ and $r$ and the fact that they both commute with the forgetful functors (\ref{constr:modfib}), $U_Q \varepsilon$ is the counit of $\pi_1 \dashv (-, 0)$. This is an equivalence since $(-, 0)$ is a fully faithful biadjoint to $\pi_1$. Hence by conservativity of $U_Q$, we see that $\varepsilon: pr \to 1_{\Coalg_Q}$ is also an equivalence, so $r$ is fully faithful.

Finally we can conclude using \ref{lemma:bifibration} as well as its dual: $p$ preserves pullbacks and pushouts as it has a left and right adjoint $r$, which is itself fully faithful. Also $\Comod_Q$ has pushouts and pullbacks. 
\end{proof}

We can now construct the pushforward and pullback of comodules.
\begin{definition}
Fix a set $A$, $\V \in \CAlg(\Prlst)$, and $\C \in \CAlg(\PrlstV)$ following \ref{notation:AVC2}.
Given a map $f: C \to D$ of coalgebras over a cooperad $Q$, our bifibration $p$ guarentees an adjunction
\[
\begin{tikzcd}
	\Comod_C & \Comod_D
	\arrow["f_{!}"{name=0}, shift left=2, from=1-1, to=1-2]
	\arrow["f^{\ast}"{name=1},shift left=2, from=1-2, to=1-1]
    \arrow["\dashv"{anchor=center, rotate=-90}, draw=none, from=0, to=1]
    \end{tikzcd}
\]
which we call pushforward and pullback of comodules. 
\end{definition}

\begin{example}
    Let $C \to k$ be the zero map from a $Q$-coalgbra to the zero coalgebra (thought of as co-augmented here). Then we have the following forgetful, cofree adjunction:
    \[
\begin{tikzcd}
	\Comod_C & \Fun(dA, \C).
	\arrow["U_C"{name=0}, shift left=2, from=1-1, to=1-2]
	\arrow["\Cofree_C"{name=1},shift left=2, from=1-2, to=1-1]
    \arrow["\dashv"{anchor=center, rotate=-90}, draw=none, from=0, to=1]
    \end{tikzcd}
\]
Given $V \in \Fun(A, \C)$ and $M \in \Fun(dA, \C)$, notice that 

\[
\Cofree_{\Cofree(V)}(M) \simeq \Cofree_{M_Q}(V, M)
\]
as objects in $\Comod_Q$, where $\Cofree_{\Cofree(V)}(M)$ is in the fiber $\Comod_C$. 
\end{example}

\begin{notation}
    We sometimes denote the induced comonad on $\Fun(dA, \C)$ by $M \mapsto Q \circ (C; M)$ in line with \cite[Section~6.1.1]{LV}. We also sometimes conflate the cofree object $\Cofree_C(M)$ with its underlying object $Q \circ (C;M).$
\end{notation}

\begin{remark}
    Everything in this section has an analog with modules over algebras, perhaps with small variation.
\end{remark}

\subsection{Stability of comodules}
Our strategy is as follows: we first construct the forgetful-cofree adjunction for comodules over $C$. Then we prove that it is comonadic and preserves limits over $\C$. Thus we have $\Comod_C$ gotten from an excisive comonad over $\C$, so it must also be stable when $\C$ is.

First we construct the forgetful-cofree adjunction for comodules over $C$. 
\begin{construction}[Forgetful-cofree adjunction of comodules]\label{constr:comod_cofree}
Fix a set $A$, $\V \in \CAlg(\Prlst)$, and $\C \in \CAlg(\PrlstV)$ following \ref{notation:AVC2}. Suppose we're given a cooperad $Q$ and a $Q$-coalgebra $C$. We construct an adjunction 
\[\begin{tikzcd}
	{\Comod_C} & \C
	\arrow[""{name=0, anchor=center, inner sep=0}, "{\Cofree_C}", shift left=2, from=1-2, to=1-1]
	\arrow[""{name=1, anchor=center, inner sep=0}, "{U_C}", shift left=2, from=1-1, to=1-2]
	\arrow["\dashv"{anchor=center, rotate=-90}, draw=none, from=1, to=0]
\end{tikzcd}\]
that fiberwise recovers the adjunction: 
\[\begin{tikzcd}
	{\Comod_Q} && {\Coalg_Q \times \C}. \\
	& {\Coalg_Q}
	\arrow["p"', from=1-1, to=2-2]
	\arrow["{\pi_1}", from=1-3, to=2-2]
	\arrow[""{name=0, anchor=center, inner sep=0}, "U", shift left=2, from=1-1, to=1-3]
	\arrow[""{name=1, anchor=center, inner sep=0}, "R", shift left=2, from=1-3, to=1-1]
	\arrow["\dashv"{anchor=center, rotate=-90}, draw=none, from=0, to=1]
\end{tikzcd}\]
To do so we check that this is a relative adjunction \cite[Chapter~7.3.2]{HA}. We can do this by checking that 
\[
\pi_1\varepsilon: UR \to 1_{\Coalg_Q \times \C}
\] is an equivalence.

Notice that the counit is $UR \to 1$. Taking $\pi_1$, we get a map $pR \to \pi_1$. Notice that $\pi_1$ and $p$ preseve limits, hence it is enough to prove that $\pi_1 (UR \to 1)$ is an equivalence on $Q \oplus \Triv$-cofree objects by using cofree resolutions.

So we reduce to checking that 
\[
\pi_1 \varepsilon \Cofree_{Q \oplus \Triv}
\]
is an equivalence. We can check this after taking $U_Q$ since $U_Q$ is conservative. Hence we check 
\[
U_Q \pi_1 \varepsilon \Cofree_{Q \oplus \Triv} \simeq \pi_1 U_{Q \oplus \Triv} \varepsilon \Cofree_{Q \oplus \Triv}
\]
is an equivalence. This map however is equivalent to the first projection of the map induced by the cooperad morphism $\Md_Q \to Q \oplus \Triv$:
\[
\pi_1 (\Md_Q \circ {-} \to Q \oplus \Triv \circ {-})
\]
which is an equivalence on the first factor by construction! Hence this is a relative adjunction.

Thus being a relative adjunction, it gives us adjunctions on each fiber:
\[\begin{tikzcd}
	{\Comod_C} & \C
	\arrow[""{name=0, anchor=center, inner sep=0}, "{\Cofree_C}", shift left=2, from=1-2, to=1-1]
	\arrow[""{name=1, anchor=center, inner sep=0}, "{U_C}", shift left=2, from=1-1, to=1-2]
	\arrow["\dashv"{anchor=center, rotate=-90}, draw=none, from=1, to=0]
\end{tikzcd}\]

which is exactly the forgetful-cofree adjunction for comodules.
\end{construction}

Next we move to show that $U_C$ is comonadic and preserves finite limits. First we need a short lemma:

\begin{lemma}\label{lemma:fiber_limit}
    Let $K$ be a weakly contractible diagram, and let $f: \D \to \C$ be a morphism of categories that preserves $K$-limits, and let $c \in \C$ be an object. Then fiber map $D_c \to \D$ creates $K$-limits. 
\end{lemma}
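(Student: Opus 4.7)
The plan is to exploit that $\D_c$ is, by definition, the pullback $\D \times_{\C} \{c\}$ of $\infty$-categories, together with the basic principle that limits commute with limits: $K$-limits in a pullback of $\infty$-categories are computed as pullbacks of the corresponding $K$-limits in each vertex of the cospan.

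First I would take a diagram $d \colon K \to \D_c$ and set $\bar d := (\D_c \hookrightarrow \D) \circ d$, so that by definition of the fiber $f \bar d \simeq \mathrm{const}_c$, the constant diagram at $c$. The key input of weak contractibility of $K$ is that $\mathrm{const}_c \colon K \to \C$ admits $c$ itself as a $K$-limit, with identity morphisms forming the limit cone. Since $f$ preserves $K$-limits by hypothesis, if $L := \lim_K \bar d$ exists in $\D$ then $f(L) \simeq \lim_K f\bar d \simeq c$, and this equivalence canonically lifts $L$ to an object of $\D_c$.

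To conclude that $L$ is actually a $K$-limit of $d$ in $\D_c$, not merely an object that happens to lie in $\D_c$, I would invoke the pullback-of-limits description directly: a $K$-limit in $\D \times_\C \{c\}$ is the pullback of the $K$-limits computed in $\D$, in $\{c\}$, and in $\C$, which here are $L$, the unique point, and $c$ respectively, with pullback equal to $L$ viewed in $\D_c$. The same description shows that a cone in $\D_c$ is a $K$-limit cone if and only if its image in $\D$ is, which gives reflection of $K$-limits; combined with preservation, this is the creation property.

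I expect no serious obstruction in executing this plan. The only real subtlety is the correct invocation of weak contractibility at the precise step where one identifies $\lim_K \mathrm{const}_c$ with $c$ in $\C$—without this input the limit of $\bar d$ in $\D$ need not lie over $c$, and the conclusion would fail (as already seen with $K = \emptyset$, where the $K$-limit of $\mathrm{const}_c$ is the terminal object of $\C$ rather than $c$). Everything else is a formal manipulation with pullbacks of $\infty$-categories.
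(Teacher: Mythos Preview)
Your proposal is correct and follows essentially the same approach as the paper: form the composite diagram in $\D$, use weak contractibility to identify $\lim_K \mathrm{const}_c \simeq c$, apply preservation of $K$-limits by $f$ to see the limit $L$ lies over $c$, and conclude it is the limit in the fiber. The paper dispatches the final step (that $L$ really is a limit in $\D_c$, not just an object there) with the word ``clearly'', whereas you justify it via the limits-in-pullbacks-of-categories principle; this is a minor expository difference, not a different argument.
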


\begin{proof}
    Let's be given a diagram $p: K \to \D_c$. Then we can take the composition $p': K \to \D$, and suppose that we have a limit $L$ to $p': K \to \D$. Notice that since $f$ preserves $K$-limits, $fL$ must be sent to the limit of $fp': K \to \C$, which is just $c$ since $fp'$ is the constant diagram at $c$! Here we use the weak contractibility of $K$ to deduce that the limit is just $c$ again. 

    Hence we note that $fL \simeq c$. So $L$ lifts to an object in $\D_c$, and is clearly the totalization of $p$ in $D_c$.
\end{proof}

\begin{proposition}[Comonadicity of $U_C$]
The forgetful functor $U_C\Comod_C \to \C$ constructed in \ref{constr:comod_cofree} is comonadic.
\end{proposition}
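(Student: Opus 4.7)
I would invoke the comonadic form of the Barr--Beck--Lurie theorem (dual to \cite[Theorem~4.7.3.5]{HA}) applied to the adjunction $U_C \dashv \Cofree_C$ built in \ref{constr:comod_cofree}. This reduces the claim to two conditions: that $U_C$ is conservative, and that $\Comod_C$ admits totalizations of $U_C$-split cosimplicial objects with $U_C$ preserving them.

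For conservativity, recall that $\Comod_C$ is by definition the fiber of $p\colon \Comod_Q \to \Coalg_Q$ over $C$, and that the forgetful $U\colon \Comod_Q \to \Fun(A \amalg dA, \C)$ is tautologically comonadic---hence conservative---since $\Comod_Q = \LComod_{\Md_Q}(\Fun(A \amalg dA, \C))$ by definition. A morphism $f\colon (C, M) \to (C, M')$ in $\Comod_C$ maps under $U$ to a morphism whose $A$-component is $1_{U_Q C}$ and whose $dA$-component is $U_C f$; if the latter is an equivalence, so is $Uf$ and hence $f$.

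For the totalization condition, I would lift the problem to $\Comod_Q$. Let $X^\bullet$ be a $U_C$-split cosimplicial object in $\Comod_C$, viewed as a cosimplicial object in $\Comod_Q$ via the fiber inclusion. Then $U X^\bullet$ in $\Fun(A \amalg dA, \C)$ is itself split: the $A$-component $U_Q p X^\bullet$ is constant at $U_Q C$, since $p X^\bullet$ is constant at $C$, and constant cosimplicial objects are automatically split; the $dA$-component is split by hypothesis. Comonadicity of $U$ then yields a totalization $\Tot X^\bullet$ in $\Comod_Q$ preserved by $U$. Since $p$ has the left adjoint $r$, it preserves this totalization, so $p(\Tot X^\bullet) \simeq \Tot(p X^\bullet) \simeq C$. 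Applying \ref{lemma:fiber_limit} to $p$ with $K = \Delta$ (weakly contractible, as $[0]$ is initial), the totalization lies in the fiber $\Comod_C$ and is the totalization of $X^\bullet$ there. Finally, $U_C$ preserves this totalization because $U$ does and the projection of $\Fun(A \amalg dA, \C)$ onto its $dA$-factor is itself a right adjoint.

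The main point---easily handled but worth highlighting---is the observation that $U_C$-splitness downstairs automatically upgrades to $U$-splitness upstairs, which rests on the constancy of the $A$-component of any cosimplicial diagram in the fiber over $C$. The remainder is formal bookkeeping combining the universal comonadicity of $U$ with the fiber-limit lemma \ref{lemma:fiber_limit} and the fact that $p$ preserves limits.
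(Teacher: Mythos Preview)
Your argument is correct and follows essentially the same strategy as the paper: reduce to Barr--Beck--Lurie, deduce conservativity of $U_C$ from conservativity of the global forgetful functor, and verify the split-totalization condition by lifting to $\Comod_Q$, using that the $A$-component of a cosimplicial object in the fiber is constant (hence split) so that the global forgetful functor sees a split diagram, and then descending back to the fiber via \ref{lemma:fiber_limit}. The only cosmetic difference is that you work directly with the tautologically comonadic $U_{\Md_Q}\colon \Comod_Q \to \Fun(A\amalg dA,\C)$, whereas the paper phrases things through the intermediate $U\colon \Comod_Q \to \Coalg_Q \times \C$ from \ref{constr:comod_cofree}; your choice makes the comonadicity of the global functor immediate rather than something to be asserted.
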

\begin{proof}
We know that $U$ is comonadic. This gives that $U_C$ is conservative. We now need to show that it also preserves $U_C$-split totalizations, and we do so by first showing that $\Comod_C \to \Comod_Q$ preserves such totalizations. 

Notice that $p$ preserves all limits and colimits. Hence by lemma \ref{lemma:fiber_limit} we know that the map $\Comod_C \to \Comod_Q$ preserves and creates totalizations. The same is true for the map $\C \to \Coalg_Q \times \C$ by including at the coalgebra $C$. 

Now given a $U_C$-split totalization $X^\bullet$, we notice that $U_C X^\bullet$ is split. Hence including this into $\Coalg_Q \times \C$, it's still a split totalization. Now including $X^\bullet$ into $\Comod_Q$, we notice that $U X^\bullet$ is split by above, as its just the inclusion of $U_C X^\bullet$. Hence $U$ preserves this totalization, and thus so does $U_C$ because the inclusions create totalizations.
\end{proof}

Lastly we check that $U_C$ preserves limits. 
\begin{proposition}
    The functor $U_C: \Comod_C \to \C$ constructed in \ref{constr:comod_cofree} preserves limits.
\end{proposition}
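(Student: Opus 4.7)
The plan is to realize $U_C$ as the restriction of the total forgetful $U\colon \Comod_Q \to \Coalg_Q \times \C$ to the fiber of $p$ over $C$, followed by the projection $\pi_2$ to $\C$, and to argue limit preservation at each stage separately. The key inputs are the bifibration structure on $p$ (both adjoints exist and the right adjoint $r$ is fully faithful) and the explicit cofree formula $\Cofree_{\Md_Q}(V,M) \simeq \Md_Q \circ (V,M)$ from the preceding sections.

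First I would show that the fiber inclusion $\Comod_C \hookrightarrow \Comod_Q$ creates all limits. Given $F\colon K \to \Comod_C$, form its limit $L$ in the ambient $\Comod_Q$; since $p$ preserves limits (it admits both adjoints), $pL \simeq \lim_K C$ in $\Coalg_Q$. For weakly contractible $K$ this is just $C$ by \ref{lemma:fiber_limit}, so $L$ already lies in the fiber and is the limit there. For non-contractible diagrams (products, terminal object), I would use the bifibration structure: the limit in the fiber is obtained as the Cartesian lift of $L$ along the universal comparison morphism $C \to pL$ in $\Coalg_Q$; full faithfulness of $r$ ensures this Cartesian lift agrees with applying $r$ to the relevant component, so it is compatible with the forgetful functor. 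Next I would verify that $U$ preserves limits, using that the induced comonad $\Md_Q \circ -$ on $\Fun(A \amalg dA, \C)$ is linear (hence exact) in each variable because $\C$ is stable and each symmetric-sequence summand $\Md_Q(n) \otimes_{\Sigma_n}(-)$ is a colimit-preserving linear functor; combined with comonadicity of the forgetful $\Comod_Q \to \Fun(A \amalg dA, \C)$, this identifies limits in $\Comod_Q$ with limits computed underlying, and composing with the projections recovers limit preservation for $U$.

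Assembling the two pieces: for $F\colon K \to \Comod_C$, we get $U_C(\lim_{\Comod_C} F) \simeq \pi_2 U(\lim_{\Comod_Q} F) \simeq \pi_2 \lim_K U F \simeq \lim_K U_C F$, which is the desired equivalence. The main obstacle I expect is the non-contractible case, where one must match up the fiberwise limit (obtained by Cartesian lifting in $\Comod_Q$) with the naive underlying limit in $\C$; this requires using the explicit form of the right adjoint $r$ as the zero-extension $(-,0)$ together with the fact that its composite with $\pi_2 U$ is the identity on the $\C$-factor, so that the Cartesian-lift correction introduced at the level of $\Comod_Q$ disappears after projecting to $\C$.
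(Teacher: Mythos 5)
There is a genuine gap, and it sits exactly where your argument does its real work: the claim in your second step that the comonad $\Md_Q \circ {-}$ on $\Fun(A \amalg dA, \C)$ is ``linear (hence exact) in each variable,'' so that limits in $\Comod_Q$ are computed underlying and $U$ (hence $\pi_2 U$) preserves them. This is false in the coalgebra variable. By construction, $\Md_Q \circ (V, M) \simeq (Q \circ V, \; Q \circ (V; M))$, and the first component $Q \circ V \simeq \bigoplus_n Q(n) \otimes_{\Sigma_n} V^{\otimes n}$ is polynomial in $V$, not linear; tensor powers do not preserve limits. Consequently the forgetful functor $\Comod_Q \to \Fun(A \amalg dA, \C)$ does not preserve limits: if it did, then in particular the product of two $Q$-coalgebras (viewed as comodules with zero comodule part) would have underlying object the direct sum of their underlying objects, which is already false for conilpotent coassociative coalgebras --- dual to the fact that the free product of associative algebras is not the direct sum. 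A counterexample inside the comodule direction as well: the product of $(C, 0)$ and $(0, N)$ in $\Comod_Q$ lies over $C \times 0 \simeq C$ and is $(C, \Cofree_C(N))$, because pullback along the zero map $C \to 0$ is exactly the cofree functor; applying $\pi_2 U$ gives $Q \circ (C; N)$, not the product $0 \times N \simeq N$ of the underlying objects. So the middle equivalence $\pi_2 U(\lim_{\Comod_Q} F) \simeq \pi_2 \lim_K U F$ in your final chain fails, and the ``Cartesian-lift correction'' you hope disappears after projecting to $\C$ is precisely where the discrepancy lives for non-contractible $K$.

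The linearity actually available is linearity in the comodule variable only: with the coalgebra entry held fixed, the functor $M \mapsto Q \circ (C; M)$ is linear, because the construction of $\Md$ keeps exactly one $dA$-marked factor in each term. This is what the paper's proof uses, and it suffices: a diagram $K \to \Comod_C$ is constant at $C$ in the coalgebra variable, so one may take pointwise limits of the cobar resolutions $(C, V_k) \to \Md_Q \circ (C, V_k) \to \cdots$; since $\Md_Q \circ (C, {-})$ preserves limits, the underlying limit $(C, \lim_K V_k)$ inherits a comodule structure and is the limit in $\Comod_C$, whence $U_C$ preserves (indeed creates) limits. Note that your global-to-fiberwise outline, even if the Cartesian-pullback bookkeeping in step one were carried out correctly, would still need this fiberwise linearity to identify the underlying object of the fiberwise limit --- at which point the detour through $\Comod_Q$, where limits are genuinely not computed underlying, buys nothing. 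I recommend you argue fiberwise from the start, i.e., directly with the comonad of the adjunction $U_C \dashv \Cofree_C$.
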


\begin{proof}
We notice the following. Given a limit diagram $K \to \Comod_C$, we can represent each comodule object $(C, V_k)$ in this diagram for $k \in K$ by the underlying cosimplicial object
\[\begin{tikzcd}
	{(C, V_k)} & {\Md_Q \circ (C, V_k)} & {\Md^2_Q \circ (C, V_k)} & \dots & {\Md^n_Q \circ (C, V_k)} & \dots
	\arrow[from=1-1, to=1-2]
	\arrow[shift right, from=1-2, to=1-3]
	\arrow[shift left, from=1-2, to=1-3]
	\arrow[shift left=2, from=1-3, to=1-4]
	\arrow[shift right=2, from=1-3, to=1-4]
	\arrow[from=1-3, to=1-4]
	\arrow[""{name=0, anchor=center, inner sep=0}, shift right=3, from=1-4, to=1-5]
	\arrow[""{name=1, anchor=center, inner sep=0}, shift left=3, from=1-4, to=1-5]
	\arrow[""{name=2, anchor=center, inner sep=0}, shift left=3, from=1-5, to=1-6]
	\arrow[""{name=3, anchor=center, inner sep=0}, shift right=3, from=1-5, to=1-6]
	\arrow[shorten <=2pt, shorten >=2pt, phantom, "\dots"{rotate=90}, from=1, to=0]
	\arrow[shorten <=2pt, shorten >=2pt, phantom, "\dots"{rotate=90}, from=2, to=3]
\end{tikzcd}\]
in $\Fun(A \amalg dA, \C)$. 

Notice that $\Md_Q\circ (C, V)$ preserves limits in $V$, as its linear in $V$. Hence we can take the pointwise limit in $\Fun(A \amalg dA, \C)$ and get

\[\begin{tikzcd}
	{(C, \lim_K V_k)} & {\Md_Q \circ (C, \lim_K V_k)} & {\Md^2_Q \circ (C, \lim_K V_k)} & \dots & {\Md^n_Q \circ (C, \lim_K V_k)} & \dots
	\arrow[from=1-1, to=1-2]
	\arrow[shift right, from=1-2, to=1-3]
	\arrow[shift left, from=1-2, to=1-3]
	\arrow[shift left=2, from=1-3, to=1-4]
	\arrow[shift right=2, from=1-3, to=1-4]
	\arrow[from=1-3, to=1-4]
	\arrow[""{name=0, anchor=center, inner sep=0}, shift right=3, from=1-4, to=1-5]
	\arrow[""{name=1, anchor=center, inner sep=0}, shift left=3, from=1-4, to=1-5]
	\arrow[""{name=2, anchor=center, inner sep=0}, shift left=3, from=1-5, to=1-6]
	\arrow[""{name=3, anchor=center, inner sep=0}, shift right=3, from=1-5, to=1-6]
	\arrow[shorten <=2pt, shorten >=2pt, phantom, "\dots"{rotate=90}, from=1, to=0]
	\arrow[shorten <=2pt, shorten >=2pt, phantom, "\dots"{rotate=90}, from=2, to=3]
\end{tikzcd}\]
which gives $(C, \lim_k V_k)$ a natural $C$-comodule structure! Hence it lifts naturally to $\Comod_C$. With its induced $C$-comodule structure, $(C, \lim_k V_k)$ must clearly be the limit of the diagram $K \to \Comod_C$. Next we see that $U_C (C, \lim_k V_k) \simeq \lim_k V_k$ while $U_C (C, V_k) = V_k$ in $\C$. One can see this by using how $U: \Comod_Q \to \Coalg_Q \times \C$ is computed, then taking the fiber at $C \in \Coalg_Q$.

So we see that $U_C$ sends this limit $(C, \lim_k V_k)$ to the limit $\lim_k V_k$ in $\C$! Hence we see that $U_C$ preserves limits.
\end{proof}

So finally we have:
\begin{theorem}
    $\Comod_C$ as constructed in \ref{constr:comod_cofree} is a stable category.
\end{theorem}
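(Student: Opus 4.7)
The plan is to identify $\Comod_C$ with the category of comodules for an exact comonad on the stable category $\C$, and then deduce stability by transfer along the conservative forgetful functor. By the previous proposition $U_C \colon \Comod_C \to \C$ is comonadic, so the dual Barr-Beck theorem (see \cite[Theorem~4.7.3.5]{HA}) gives an equivalence $\Comod_C \simeq \Comod_T(\C)$, where $T := U_C \Cofree_C$ is the induced comonad on $\C$.

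Next, I observe that $T$ preserves finite limits. Indeed $\Cofree_C$ is a right adjoint and therefore preserves all limits, while $U_C$ preserves limits by the preceding proposition. Since $\C$ is stable, any finite-limit-preserving endofunctor also preserves finite colimits, so $T$ is in fact exact.

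From this I deduce that $\Comod_T(\C)$ is stable. Finite limits in $\Comod_T(\C)$ exist and are created by $U_C$: one takes the limit $L = \lim_k U_C X_k$ in $\C$ and uses that $T$ commutes with finite limits to transport the coactions $X_k \to T X_k$ into a canonical coaction $L \to TL$, exactly as in the proof of the preceding proposition. Dually, finite colimits in $\Comod_T(\C)$ exist and are created by $U_C$, using that $T$ commutes with finite colimits (this is where exactness of $T$ is essential). The zero object of $\C$ lifts uniquely to a zero object of $\Comod_T(\C)$ because $T(0) \simeq 0$. Finally, $U_C$ is conservative and preserves both $\Sigma$ and $\Omega$; since $\Sigma \Omega \simeq \id \simeq \Omega \Sigma$ in $\C$, the same equivalences hold in $\Comod_C$, completing the verification of stability.

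The real work has already been carried out in the comonadicity and limit-preservation propositions above; the only genuinely new point is assembling the $T$-comodule structure on finite colimits, which is immediate once the exactness of $T$ is in hand. I anticipate no further obstacle.
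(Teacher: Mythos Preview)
Your proposal is correct and follows essentially the same approach as the paper: both use the comonadicity of $U_C$ together with the fact that $U_C$ preserves limits to conclude that the comonad $T = U_C\Cofree_C$ on $\C$ is exact, and hence that its comodules form a stable category. The paper's proof is a single terse paragraph citing the two preceding propositions, while you spell out the final step (that comodules over an exact comonad on a stable category are stable) in more detail; but the substance is the same.
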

\begin{proof}
    We know that $\Comod_C$ is comonadic over $\C$ via the forgetful-cofree adjunction, and that the comonad, which we'll denote by $M_C$, preserves limits by our propositions above. Hence the coalgebras of $M_C$ will be a stable category of $\C$ is stable, which means that $\Comod_C$ is stable.
\end{proof}

\section{Comodules and tangent complex}
The cotangent complex is a very well known construction. Historically it arose as an analog to sections of the tangent space to a manifold, just as rings are functions on a manifold. Hence, one can define for a given ring map $A \to B$, the set of Kahler differentials on $B$ over $A$, denoted as $\Omega_{B/A}^1$, with the following universal property:
\[
\Hom_B(\Omega_{B/A}^1, M) \simeq \operatorname{Der}_A(B, M).
\]
In other words, mapping from $\Omega_{B/A}^1$ to another $B$-module $M$ is the same as giving an $A$-linear derivation $B \to M$. This can be rephrased by using the following construction on $(B, M)$: we can make a new ring $B \oplus M$ whose multiplication is square-zero on $M$. Then an $A$-linear derivation $B \to M$ is equivalent to an $A$-algebra morphism from $B \to B \oplus M$.

Then this construction was given a homotopical flavor by using derived algebraic geometry, forming the cotangent complex. Historically this developed from the work of Andr\'e, Quillen, and Illusie \cite{Andre, Quillen, Illusie}. This approach was enveloped into higher algebra using higher categorical techniques by Lurie \cite[Section~7.3]{HA}.

Here we discuss a dual version of the cotangent complex for coalgebras instead, which we call the {\em tangent complex}. First, given a $Q$-coalgebra $C$ along with a $C$-comodule $M$, we construct a new $Q$-coalgebra $C \oplus M$. This coalgebra has square zero comultiplication on $M$, analogous to the square-zero construction of $B \oplus M$ for rings. Then we can define the tangent complex as the right adjoint to this $(C, M) \mapsto C \oplus M$ functor.

Next we move on to use the tangent complex functor and its adjoint to prove a key result: that comodules are the costabilization of coalgebras. This is again analogous to the algebraic result that modules are the stabilization of algebras, for example see \cite{FrancisTangent}.

\subsection{Construction of tangent complex}
\begin{construction}[Tangent, square-zero adjunction] \label{constr:tangent}

Fix a set $A$, $\V \in \CAlg(\Prlst)$, and $\C \in \CAlg(\PrlstV)$ following \ref{notation:AVC2}. Let $Q$ be a cooperad over $\V$. We construct an adjunction between $\Coalg_Q$ and $\Comod_Q$ fitting into the following diagram:

\[\begin{tikzcd}
	{\Comod_Q} & {\Coalg_Q} \\
	{\Fun(A\amalg dA, \C)} & {\Fun(A, \C)}
	\arrow[""{name=0, anchor=center, inner sep=0}, "U"', shift right=2, from=1-2, to=2-2]
	\arrow[""{name=1, anchor=center, inner sep=0}, "\oplus", shift left=2, from=1-1, to=1-2]
	\arrow[""{name=2, anchor=center, inner sep=0}, "T", shift left=2, from=1-2, to=1-1]
	\arrow[""{name=3, anchor=center, inner sep=0}, "{\Cofree_{\Md_Q}}"', shift right=2, from=2-1, to=1-1]
	\arrow[""{name=4, anchor=center, inner sep=0}, "U"', shift right=2, from=1-1, to=2-1]
	\arrow[""{name=5, anchor=center, inner sep=0}, "{\Cofree_Q}"', shift right=2, from=2-2, to=1-2]
	\arrow[""{name=6, anchor=center, inner sep=0}, "\oplus", shift left=2, from=2-1, to=2-2]
	\arrow[""{name=7, anchor=center, inner sep=0}, "\Delta", shift left=2, from=2-2, to=2-1]
	\arrow["\dashv"{anchor=center, rotate=-90}, draw=none, from=1, to=2]
	\arrow["\dashv"{anchor=center}, draw=none, from=0, to=5]
	\arrow["\dashv"{anchor=center}, draw=none, from=4, to=3]
	\arrow["\dashv"{anchor=center, rotate=-90}, draw=none, from=6, to=7]
\end{tikzcd}\]
where $\oplus: \Fun(A \amalg dA, \C) \to \Fun(A, \C)$ sends $F$ to $a \mapsto F(a) \oplus F(da)$, and $\Delta: \Fun(A, \C)\to \Fun(A \amalg dA, \C)$ extends $F$ to $dA$ via $da \mapsto F(a)$. 

We first construct the left adjoint $\oplus: \Comod_Q \to \Coalg_Q$. For this it's enough to show that $\oplus: \Fun(A \amalg dA, \C) \to \Fun(A, \C)$ is a $\Sseq_A$-colax map \ref{def:lax} from the action of $\Sseq_A$ (here $\Sseq_A$ acts on $\Fun(A \amalg dA, \C)$ using $\Md: \Sseq_A \to \Sseq_{A \amalg dA}$). 

Hence given $S \in \Sseq_A$, we want natural maps $\eta_S$ fitting into
\[\begin{tikzcd}
	{\Fun(A \amalg dA, \C)} & {\Fun(A \amalg dA, \C)} \\
	{\Fun(A, \C)} & {\Fun(A, \C)}
	\arrow["\oplus", from=1-1, to=2-1]
	\arrow["{\Md_S \circ {-}}", from=1-1, to=1-2]
	\arrow["\oplus", from=1-2, to=2-2]
	\arrow["{S \circ {-}}", from=2-1, to=2-2]
	\arrow[Rightarrow, "\eta_S", from=1-2, to=2-1]
\end{tikzcd}\]
and commuting with the $\circ$ action.

However this is easy to see because given $S \in \Sseq_A$ and $(C_a, M_{da})_{a \in A}$ in $\Fun(A \amalg dA, \C)$, we just take $\eta_S(C_a, M_{da}): \oplus(\Md_S \circ (C_i, M_i)) \to S \circ (C_a \oplus M_{da})$ to be the the direct summand inclusion consisting of the trees with at most one leaf using an $M_{da}$, which is exactly how $\Md_S$ is defined. In more detail, note that $S \circ (C_a \oplus M_{da})$ consists of terms 
\[
    \bigoplus_{\{a_1, \dots, a_m\}} S(\{a_1, \dots, a_m\}, a) \otimes_{\Sigma_m} [(C_{a_1} \oplus M_{da_1}) \otimes \dots \otimes (C_{a_m} \oplus M_{da_m})],  
\]
which after distributing the direct sums, consists of terms with $S(\{a_1, \dots, a_m\}, a)$ tensored with strings of $C_{a_i}$ and $M_{da_j}$. Then note that $\oplus(\Md_S \circ (C_i, M_i))$ consists of direct sums of the same strings except that one can only allow at most one $\M_{da_j}$. The colax naturality is clear from our definitions as direct sum inclusions. 

Hence we have an induced map on coalgebras:
\[\begin{tikzcd}
	{\Comod_Q} & {\Fun(A\amalg dA, \C)} \\
	{\Coalg_Q} & {\Fun(A, \C),}
	\arrow[""{name=0, anchor=center, inner sep=0}, "\oplus", from=1-2, to=2-2]
	\arrow[""{name=1, anchor=center, inner sep=0}, "U", from=2-1, to=2-2]
	\arrow[""{name=2, anchor=center, inner sep=0}, "\oplus", from=1-1, to=2-1]
	\arrow[""{name=5, anchor=center, inner sep=0}, "U", from=1-1, to=1-2]
\end{tikzcd}\]

Now clearly $\oplus$ preserves colimits (for example it follows since $U$ creates colimits, and on underlying categories $\oplus$ preserves colimits). Hence it has a right adjoint, and the right adjoint commutes because the left adjoints do. Hence we have our full diagram
\[\begin{tikzcd}
	{\Comod_Q} & {\Coalg_Q} \\
	{\Fun(A\amalg dA, \C)} & {\Fun(A, \C)}
	\arrow[""{name=0, anchor=center, inner sep=0}, "U"', shift right=2, from=1-2, to=2-2]
	\arrow[""{name=1, anchor=center, inner sep=0}, "\oplus", shift left=2, from=1-1, to=1-2]
	\arrow[""{name=2, anchor=center, inner sep=0}, "T", shift left=2, from=1-2, to=1-1]
	\arrow[""{name=3, anchor=center, inner sep=0}, "{\Cofree_{\Md_Q}}"', shift right=2, from=2-1, to=1-1]
	\arrow[""{name=4, anchor=center, inner sep=0}, "U"', shift right=2, from=1-1, to=2-1]
	\arrow[""{name=5, anchor=center, inner sep=0}, "{\Cofree_Q}"', shift right=2, from=2-2, to=1-2]
	\arrow[""{name=6, anchor=center, inner sep=0}, "\oplus", shift left=2, from=2-1, to=2-2]
	\arrow[""{name=7, anchor=center, inner sep=0}, "\Delta", shift left=2, from=2-2, to=2-1]
	\arrow["\dashv"{anchor=center, rotate=-90}, draw=none, from=1, to=2]
	\arrow["\dashv"{anchor=center}, draw=none, from=0, to=5]
	\arrow["\dashv"{anchor=center}, draw=none, from=4, to=3]
	\arrow["\dashv"{anchor=center, rotate=-90}, draw=none, from=6, to=7]
\end{tikzcd}\]
\end{construction}

\begin{remark}
    Notice that intuitively, the left adjoint $\oplus: \Comod_Q \to \Coalg_Q$ sends $(C, M)$ to the square zero coalgebra $C \oplus M$ where $M$ has comultiplication zero. 
\end{remark}

A fact that we'll use in showing that comodules are the costabilization of coalgebras is the following:
\begin{construction}
    The left adjoint $\oplus: \Comod_Q \to \Coalg_Q$ constructed in \ref{constr:tangent} is augmented by the fibration $p:\Comod_Q \to \Coalg_Q$ as constructed in \ref{constr:modfib} which lies over the inclusion and quotient augmentation of $\oplus$ by $\pi_1$ as functors $\Fun(A \amalg dA, \C) \to \Fun(A, \C).$ In other words, we have the following diagram in functor categories
\[\begin{tikzcd}
	p \\
	& \oplus \\
	p
	\arrow[from=1-1, to=2-2]
	\arrow[from=2-2, to=3-1]
	\arrow["{1_p}"', from=1-1, to=3-1]
\end{tikzcd}\]
\end{construction}
This just records the fact that the square zero coalgebra $C \oplus M$ fits into the diagram
\[\begin{tikzcd}
	C \\
	& {C \oplus M} \\
	C
	\arrow[from=1-1, to=2-2]
	\arrow[from=2-2, to=3-1]
	\arrow["{1_C}"', from=1-1, to=3-1]
\end{tikzcd}\]

\begin{proof}
    This is an easy consequence of definitions. The fibration $p$ is defined via a strong $\Sseq_A$-monoidal structure on $\pi_1: \Fun(A \amalg dA, \C) \to \Fun(A, \C)$. The map $\oplus$ is defined via a colax $\Sseq_A$-monoidal structure on the undecorated $\oplus: \Fun(A \amalg dA, \C) \to \Fun(A, \C)$ \ref{def:lax}. 
    
    Notice that this reduced $\oplus: \Fun(A \amalg dA, \C) \to \Fun(A, \C)$ is naturally augmented by $\pi_1$: there's a natural inclusion transformation $\pi_1 \to \oplus$ whose components are $i_C: C \to \C \oplus M$. There's also a natural quotient transformation $\oplus \to \pi_1$ whose components are $q_C: C\oplus M \to C$. 

    One can easily check that these two natural transformations commute with the colax $\Sseq_A$-monoidal structures \ref{def:lax}. For example, given $C, M \in \C$ and $S \in \Sseq_A$, the natural inclusion satisfies
\[\begin{tikzcd}
	{S\circ C} & {} & {S\circ C} \\
	{S \circ C \oplus S \circ(C;M)} && {S \circ (C \oplus M)},
	\arrow["{i_{S\circ C}}", from=1-1, to=2-1]
	\arrow["{1_{S \circ C} = c_{\pi_1}}"', from=1-1, to=1-3]
	\arrow["{S\circ i_C}", from=1-3, to=2-3]
	\arrow["{c_p}"', from=2-1, to=2-3]
\end{tikzcd}\]
    where $c_{-}$ stands for the structure morphism for colaxness of either $\pi_1$ or $p$. The square commutes because both legs compute the natural inclusion of $S \circ C$ into $S \circ (C \oplus M)$. The higher coherences for the colax morphisms are also such natural inclusions. Similarly, the quotient satisfies a dual diagram, and clearly commutes with the colaxness morphisms.

    Hence the natural inclusion transformation and natural quotient transformation lift uniquely to natural transformations $p \to \oplus$ and $\oplus \to p$, which compose to the identity, as required.
\end{proof}

\begin{remark}\label{remark:aug_oplus}
    Now we notice that for each coalgebra $C$, the above result shows that the $\oplus$ functor can be thought of as giving a morphism $\Comod_C \to \Coalg_Q^{C/-/C}$. In other words, codomain is $C$-coaugmented $Q$-coalgebras.
\end{remark}

Back to tangent complexes. Once we've defined tangent complexes, we can also discuss derivations.
\begin{definition}
    Fix a set $A$, $\V \in \CAlg(\Prlst)$, and $\C \in \CAlg(\PrlstV)$ following \ref{notation:AVC2}. Let $Q$ be a cooperad over $\V$. Given a comodule $M \in \Comod_C$ and a morphism $f: C \to D$, a {\em $C$-derivation from $M$ to $D$}  is the structure of a diagram 
    \[
    \begin{tikzcd}
        C \oplus M \ar[r, "\xi"] & D \\ 
        & C \ar[u, "f"] \ar[ul]
    \end{tikzcd}
    \]
    in $\Coalg_Q$, where the morphism $C \to C \oplus M$ is the natural inclusion. 

    Notice that this data is equivalent to a morphism $\xi': (C, M) \to (D, T_D)$ in $\Comod_Q$ lying over $f: C \to D$. When the context is clear, we sometimes denote such a derivation by just $\xi: M \to D$ in $\Fun(A, \C)$. Here $M$ is thought of as a subobject of $C \oplus M$ in $\Fun(A, \C)$. 
\end{definition}

\begin{definition}\label{def:univ_coder}
    We note that the counit of the adjunction $\oplus \dashv T$ is a morphism
    \[
    C \oplus T_C \to C
    \]
    in $\Coalg_Q$. We call this the {\em universal derivation $\xi_C: T_C \to C$}.
\end{definition}

\begin{example}[Tangent complex of a cofree coalgebra]\label{example:tangent_cofree}
Given $C := \Cofree(V)$ a cofree coalgebra. Then the tangent complex $T_{\Cofree(V)}$ can be calculated by as follows. Notice that we have the diagram 
\[\begin{tikzcd}
	{\Comod_Q} & {\Coalg_Q} \\
	{\Fun(A \amalg dA, \C)} & {\Fun(A, \C)}
	\arrow[""{name=0, anchor=center, inner sep=0}, "T", shift left=2, from=1-2, to=1-1]
	\arrow[""{name=1, anchor=center, inner sep=0}, "\oplus", shift left=2, from=1-1, to=1-2]
	\arrow[""{name=2, anchor=center, inner sep=0}, "U"', shift right=2, from=1-2, to=2-2]
	\arrow[""{name=3, anchor=center, inner sep=0}, "U"', shift right=2, from=1-1, to=2-1]
	\arrow[""{name=4, anchor=center, inner sep=0}, "\oplus", shift left=2, from=2-1, to=2-2]
	\arrow[""{name=5, anchor=center, inner sep=0}, "\Delta", shift left=2, from=2-2, to=2-1]
	\arrow[""{name=6, anchor=center, inner sep=0}, "\Cofree"', shift right=2, from=2-1, to=1-1]
	\arrow[""{name=7, anchor=center, inner sep=0}, "\Cofree"', shift right=2, from=2-2, to=1-2]
	\arrow["\dashv"{anchor=center, rotate=-90}, draw=none, from=1, to=0]
	\arrow["\dashv"{anchor=center, rotate=-90}, draw=none, from=4, to=5]
	\arrow["\dashv"{anchor=center}, draw=none, from=3, to=6]
	\arrow["\dashv"{anchor=center}, draw=none, from=2, to=7]
\end{tikzcd}\]
which lets us compute $T_{\Cofree(V)}$ by instead using $\Cofree_{M_Q}(\Delta(V))$. Here $\Delta$ is the diagonal functor, the right adjoint to $\oplus$. Calculating we see that the result is $Q \circ (V; V)$, or $\Cofree_{M_Q}(V, V)$. 
\end{example}

\begin{definition}[Relative tangent complex]
    Given a map $f:C \to C'$ of $Q$-coalgebras, we can define the {\em relative tangent complex} $T_f$ (or $T_{C/C'}$ if the map is clear from context) as the fiber of 
    \[
    T_f \to T_C \to T_{C'}
    \]
    in $\Comod_Q$. This is equivalent to the fiber of $T_C \to f^{\ast} T_{C'}$ in $\Comod_C$. 
\end{definition}

\begin{example}[Relative tangent complex of cofree coalgebras]\label{example:tangent_cofree_map}
    Given $f: V \to W$, we calculate $T_{\Cofree f}$. 

    By the previous example \ref{example:tangent_cofree}, we see that the relative tangent must be 
    \[
    \fib(Q \circ (V; V) \to (\Cofree{f})^{\ast} Q \circ (W;W)).
    \]
    Notice that $(\Cofree{f})^{\ast} Q \circ (W;W)$ is equivalent to $Q \circ (V; W)$. Now since the $Q \circ (V; -)$ construction is linear, we can easily calculate this as 
    \[
    Q \circ(V; \fib(f)).
    \]
\end{example}

\begin{remark}\label{remark:rel_tangent_adjoint_to_aug_oplus}
    In the situation of \ref{remark:aug_oplus}, we notice that the right adjoint to 
    \[
    \oplus: \Comod_C \to \Coalg_Q^{C/-/C}
    \] 
    is given by the relative tangent complex $T_{-/C}$ which is indeed a $C$-comodule.
\end{remark}

\subsection{Forgetful-cofree adjunction between coalgebras and comodules}
In this section we construct the category of augmented coalgebras. Intuitively, it consists of a pair of coalgebras $C, D$ where $D$ is a $C$-augmented coalgebra. Hence the fibers of this category will be $\Coalg_Q^{C/-/C}$. This category is very important for comodules as we shall see that $\Comod_Q$ is exactly the costabilization of $\Coalg_Q^{C/-/C}$. 

We move on to construct an augmented forgetful functor \[U_C^{\aug}: \Coalg_Q^{C/-/C} \to \Comod_C.\] 

Intuitively, if we had a natural symmetric monoidal product on $\Comod_C$, we should have \[
\Coalg_Q^{C/-/C} \simeq \Coalg^{\nonunit}_Q(\Comod_C),
\] where $\nonunit$ stands for nonunital coalgebras, hence one removes the $0$-ary operations in $Q$.
Then the forgetful functor would just be the forgetful functor of a $Q$-coalgebra structure. However since we don't have an easy symmetric monoidal structure to use on $\Comod_C$, we must construct the forgetful functor directly by hand.

Now in order to define augmented coalgebras, we first define augmented objects in an arbitrary category.
\begin{notation}
    We will use $\eqTriangle$ to denote the $2$-simplex $\Delta^2$ with a degenrate arrow $[0,2]$. In other words, it represents the diagram
\[\begin{tikzcd}
	& 1 \\
	0 && 2.
	\arrow["{=}"', from=2-1, to=2-3]
	\arrow[from=2-1, to=1-2]
	\arrow[from=1-2, to=2-3]
\end{tikzcd}\]
This diagram is exactly the walking augmented object: hence an augmented object $c \to d \to c$ in $\C$ is exactly the same as a map $\eqTriangle \to \C$.
\end{notation}

\begin{construction}[Augmentation map]
Define map $\aug: \C^{\eqTriangle} \to \C \times \C$ sending the triangle 
\[\begin{tikzcd}
	V \\
	& W \\
	V
	\arrow["f", from=1-1, to=2-2]
	\arrow["g", from=2-2, to=3-1]
	\arrow["{1_V}"', from=1-1, to=3-1]
\end{tikzcd}\]
to the pair $(V, \cofib(f)) \simeq (V, \fib(g))$.
We can use $\aug$ pointwise to get $\aug: \Fun(A, \C)^{\eqTriangle} \to \Fun(A \amalg dA, \C)$, where $A$ is a set.
\end{construction}

Now we prove a simple lemma that says the data of an augmented object $c \to d \to c$ in a stable category $\C$ is the same as the pair of augmenting object $c$ and the cofiber of $c \to d$ (or equivalently the fiber of $d \to c$).

\begin{lemma}
    Let $\C$ be a stable category. Then the map $\aug: \C^{\eqTriangle} \to \C \times \C$ is an equivalence. Thus, so is the pointwise version $\aug: \Fun(A, \C)^{\eqTriangle} \to \Fun(A \amalg dA, \C)$ where $A$ is a set. 
\end{lemma}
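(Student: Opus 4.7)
The plan is to construct an explicit inverse functor $\beta \colon \C \times \C \to \C^{\eqTriangle}$ and check that both composites with $\aug$ are naturally equivalent to the identity. Define $\beta$ on objects by sending $(V, M)$ to the split retract diagram $V \xrightarrow{\iota_V} V \oplus M \xrightarrow{\pi_V} V$, where $\iota_V$ and $\pi_V$ are the biproduct inclusion and projection; the composite is canonically $\id_V$ because $\C$ is pointed. Since $\aug$ is built out of cofibers and $\beta$ is built out of direct summands, both functors are exact, so verifying they are inverse is a local question about objects in $\C$.

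For the composite $\aug \circ \beta$: applying $\aug$ to $\beta(V,M)$ yields $(V, \cofib(\iota_V))$, and the split cofiber sequence $V \to V \oplus M \to M$ in the pointed $\infty$-category $\C$ gives a canonical equivalence $\cofib(\iota_V) \simeq M$ natural in $(V, M)$. This handles one direction.

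For the composite $\beta \circ \aug$: given a retract $V \xrightarrow{i} W \xrightarrow{r} V$ with $M := \cofib(i)$, one builds a canonical comparison map of retract diagrams by keeping $V$ fixed and sending $W \to V \oplus M$ via $(r, q)$, where $q \colon W \to M$ is the quotient. This fits into a morphism of cofiber sequences
\[
\begin{tikzcd}
V \ar[r, "i"] \ar[d, equal] & W \ar[r, "q"] \ar[d, "{(r,q)}"] & M \ar[d, equal] \\
V \ar[r, "\iota_V"'] & V \oplus M \ar[r, "\pi_M"'] & M
\end{tikzcd}
\]
in the stable $\infty$-category $\C$. The outer vertical maps are identities, so by the usual long exact sequence / two-out-of-three argument for cofiber sequences in stable $\infty$-categories, the middle map $W \to V \oplus M$ is an equivalence. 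The pointwise version $\aug \colon \Fun(A,\C)^{\eqTriangle} \to \Fun(A \amalg dA, \C)$ then follows by composition, using $\Fun(A,\C)^{\eqTriangle} \simeq \Fun(A, \C^{\eqTriangle})$ and $\Fun(A \amalg dA, \C) \simeq \Fun(A,\C) \times \Fun(A,\C)$.

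The main obstacle is promoting these object-level verifications to genuine $\infty$-categorical natural equivalences of functors $\C^{\eqTriangle} \rightleftarrows \C \times \C$, rather than merely equivalences on each retract diagram. The cleanest way to bypass this is to produce $\beta$, $\aug$, the unit, and the counit together as a single adjoint datum: $\aug$ is a right adjoint in an evident way (its left adjoint $\beta$ is defined by coproducts and split retracts, both universal constructions), the counit $\beta \aug \Rightarrow \id$ is the natural comparison $(r,q) \colon W \to V \oplus M$, and the unit $\id \Rightarrow \aug \beta$ is the equivalence $M \simeq \cofib(V \to V \oplus M)$. Stability of $\C$ then implies both unit and counit are pointwise equivalences, and hence equivalences of functors.
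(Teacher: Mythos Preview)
Your proof is correct and follows essentially the same approach as the paper: both construct the explicit inverse $(V,M) \mapsto (V \xrightarrow{i_V} V \oplus M \xrightarrow{q_V} V)$ and appeal to the natural splitting $W \simeq V \oplus \cofib(f)$ of a retract in a stable category. Your version is more careful about promoting the pointwise equivalences to natural equivalences of functors (via the adjunction framing), whereas the paper's proof leaves this implicit.
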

\begin{proof}
Notice that augmented triangles
\[\begin{tikzcd}
	V \\
	& W \\
	V
	\arrow["f", from=1-1, to=2-2]
	\arrow["g", from=2-2, to=3-1]
	\arrow["{1_V}"', from=1-1, to=3-1]
\end{tikzcd}\]
in $\C$ give natural splittings of $W$ as the direct sum $V \oplus \cofib(f)$. Hence we have a natural inverse functor for $\aug$ sending a pair $(V, M)$ to the triangle 
\[
\begin{tikzcd}
	V \\
	& V \oplus M \\
	V.
	\arrow["i_V", from=1-1, to=2-2]
	\arrow["q_V", from=2-2, to=3-1]
	\arrow["{1_V}"', from=1-1, to=3-1]
\end{tikzcd}
\] 
\end{proof}

Now we can construct the augmented forgetful functor from $C$-augmented $Q$-coalgebras to $C$-comodules. First we start with the global version on global comodules.

\begin{construction}[Augmented forgetful functor]\label{constr:U_aug}
Fix a set $A$, $\V \in \CAlg(\Prlst)$, and $\C \in \CAlg(\PrlstV)$ following \ref{notation:AVC2}. Let $Q$ be a cooperad over $\V$. We construct a functor $\Coalg_Q^{\eqTriangle} \to \Comod_Q$ that lifts the following adjunction:
\[\begin{tikzcd}
	{\Coalg_Q^{\eqTriangle}} & {\Comod_Q} \\
	{\Fun(A, \C)^{\eqTriangle}} & {\Fun(A \amalg dA, \C)}.
	\arrow[""{name=0, anchor=center, inner sep=0}, "U"', shift right=2, from=1-1, to=2-1]
	\arrow[""{name=1, anchor=center, inner sep=0}, "{\Cofree_Q}"', shift right=2, from=2-1, to=1-1]
	\arrow[""{name=2, anchor=center, inner sep=0}, "{U_{\Md_Q}}"', shift right=2, from=1-2, to=2-2]
	\arrow[""{name=3, anchor=center, inner sep=0}, "{\Cofree_{\Md_Q}}"', shift right=2, from=2-2, to=1-2]
	\arrow["\aug"', from=2-1, to=2-2]
	\arrow["U^{\aug}"', dashed, from=1-1, to=1-2]
	\arrow["\dashv"{anchor=center}, draw=none, from=0, to=1]
	\arrow["\dashv"{anchor=center}, draw=none, from=2, to=3]
\end{tikzcd}\]

To do so we once again check that the undecorated $\aug: {\Fun(A, \C)^{\eqTriangle}} \to {\Fun(A \amalg dA, \C)}$ is $\Sseq_A$-colax monoidal \ref{def:lax}. Notice however that if we write a triangle
\[\begin{tikzcd}
	V \\
	& W \\
	V
	\arrow["f", from=1-1, to=2-2]
	\arrow["g", from=2-2, to=3-1]
	\arrow["{1_V}"', from=1-1, to=3-1]
\end{tikzcd}\] as 
\[
\begin{tikzcd}
	V \\
	& V \oplus M \\
	V.
	\arrow["i_V", from=1-1, to=2-2]
	\arrow["q_V", from=2-2, to=3-1]
	\arrow["{1_V}"', from=1-1, to=3-1]
\end{tikzcd}
\] 
using the natural splitting, then there is a a natural morphism from 
\[
l^n_{(V, M)}: \cofib({V^n \to (V \oplus M)^n}) \to \bigoplus_{a + b + 1 = n} V^a \otimes M \otimes V^b,
\]
which only remembers the terms that are linear in $M$. If you apply $S(n) \otimes_{\Sigma_n} {-}$ to these morphisms, then you get the colax morphism 
\[
\aug (S \circ (V \to V \oplus M \to V)) := (S\circ V, \cofib(S \circ V \to S \circ (V \oplus M)) \to (S\circ V, S\circ (V; M) := \Md_S \circ (V, M).
\]
They respect the $\Sseq_A$-action clearly since the $\circ$ action doesn't disturb the maps $l^n_{(V, M)}$.
\end{construction}

Next we'd like to show $U^{\aug}$ has a right adjoint, and that this pair is a relative adjunction over $\Coalg_Q$.

\begin{proposition}
    The functor $U^{\aug}: \Coalg^{\eqTriangle}_Q \to \Comod_Q$ \ref{constr:U_aug} has a right adjoint which we call $\Cofree^{\aug}$. Let $q: \Coalg^{\eqTriangle}_Q \to \Coalg_Q$ be the funcotr that only remembers the coalgebra at $0 \in \eqTriangle$, ie the augmenting $Q$-coalgebra. Then this adjoint pair is comonadic, and furthermore is a relative adjunction over the $\Coalg_Q$: 
\[\begin{tikzcd}
	{\Coalg^{\eqTriangle}_Q} && {\Comod_Q} \\
	& {\Coalg_Q},
	\arrow[""{name=0, anchor=center, inner sep=0}, "{U^{\aug}}", shift left=2, from=1-1, to=1-3]
	\arrow[""{name=1, anchor=center, inner sep=0}, "{\Cofree^{\aug}}", shift left=2, from=1-3, to=1-1]
	\arrow["q"', from=1-1, to=2-2]
	\arrow["p", from=1-3, to=2-2]
	\arrow["\dashv"{anchor=center, rotate=-90}, draw=none, from=0, to=1]
\end{tikzcd}\]
where $p$ is constructed in \ref{constr:modfib}.
\end{proposition}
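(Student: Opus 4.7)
The plan is to mirror Construction~\ref{constr:comod_cofree}, with $U^{\aug}$ in the role of $U_C$ and the equivalence $\aug$ in the role of $\pi_1$. The essential new observation is that $\aug$ is an equivalence on underlying categories (though its colax $\Sseq_A$-monoidal structure is only the split projection onto the linear-in-$M$ summand, not strongly monoidal), so the underlying equivalence gives us enough leverage for an adjunction-level lift.

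I would first construct $\Cofree^{\aug}$ via the adjoint functor theorem. The functor $U^{\aug}$ preserves colimits, since both $U^{\eqTriangle}: \Coalg^{\eqTriangle}_Q \to \Fun(A, \C)^{\eqTriangle}$ and $U_{\Md_Q}$ are comonadic and hence create colimits, and $\aug$ is an equivalence; and $\Coalg^{\eqTriangle}_Q$ is presentable (inheriting from $\Coalg_Q$ under the conilpotent hypothesis). For later use it is helpful to record the formula on cofree comodules, which one reads off from the commuting square of forgetful/cofree adjunctions using $\aug^{-1}(V, W) = [V \to V \oplus W \to V]$:
\[
\Cofree^{\aug}(\Cofree_{\Md_Q}(V, W)) \simeq \bigl[\Cofree_Q(V) \to \Cofree_Q(V \oplus W) \to \Cofree_Q(V)\bigr].
\]

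To establish the relative adjunction over $\Coalg_Q$, I would apply \cite[Chapter~7.3.2]{HA} and check that the counit $\varepsilon: U^{\aug}\Cofree^{\aug} \to 1_{\Comod_Q}$ becomes an equivalence after applying $p$, equivalently that $q\Cofree^{\aug} \simeq p$. Reducing to cofree comodules via cofree resolutions (whose totalizations are preserved by $p$ and, once comonadicity is established, by $U^{\aug}$) and using the displayed formula, the map becomes the identity $\Cofree_Q(V) \to \Cofree_Q(V)$. For comonadicity, I would apply Barr-Beck-Lurie. Conservativity of $U^{\aug}$ follows from the factorization $U_{\Md_Q} \circ U^{\aug} \simeq \aug \circ U^{\eqTriangle}$, a conservative comonadic functor composed with an equivalence. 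Preservation of $U^{\aug}$-split totalizations mirrors the chase in \ref{constr:comod_cofree}: a $U^{\aug}$-split cosimplicial $X^\bullet$ gives, via $U_{\Md_Q}$ and then $\aug^{-1}$, a $U^{\eqTriangle}$-split diagram whose totalization is created by the comonadic $U^{\eqTriangle}$, and preservation propagates back through $\aug$ and the conservative $U_{\Md_Q}$ to $U^{\aug}$.

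The main obstacle will be the first step: constructing the right adjoint. The colax structure of $\aug$ is only a split projection (not an equivalence), so $\Cofree^{\aug}$ cannot be produced by simply giving $\aug^{-1}$ an inverse colax structure on the underlying. One must either invoke the adjoint functor theorem (requiring presentability of $\Coalg^{\eqTriangle}_Q$, resting on the conilpotent hypothesis) or build $\Cofree^{\aug}$ by hand from the cofree formula along cofree resolutions, verifying at the end that the result is coherent as an adjoint over $\Coalg_Q$. Once the adjunction is in hand, the relative and comonadic conditions are routine diagram chases modeled on \ref{constr:comod_cofree}.
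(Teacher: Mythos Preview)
Your proposal is correct and follows essentially the same approach as the paper: both use the commutative square $U_{\Md_Q}\, U^{\aug} \simeq \aug\, U$ with $\aug$ an equivalence to deduce colimit preservation (hence a right adjoint by the adjoint functor theorem) and comonadicity, then verify the relative-adjunction condition by reducing the counit check to cofree comodules where on the first factor the colax map for $\aug$ is the identity on $Q \circ V$. Your concern about the colax structure being only a split projection is not an obstruction, since the paper, like you, produces $\Cofree^{\aug}$ via the adjoint functor theorem rather than by lifting $\aug^{-1}$ through the colax structure.
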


\begin{proof}
    First we start by showing that the right adjoint exists. It's enough then to show that $U^{\aug}$ preserves colimits as our categories are presentable. 
    
    Notice that in 
    \[\begin{tikzcd}
	{\Coalg_Q^{\eqTriangle}} & {\Comod_Q} \\
	{\Fun(A, \C)^{\eqTriangle}} & {\Fun(A \amalg dA, \C)},
	\arrow[""{name=0, anchor=center, inner sep=0}, "U"', shift right=2, from=1-1, to=2-1]
	\arrow[""{name=1, anchor=center, inner sep=0}, "{\Cofree_Q}"', shift right=2, from=2-1, to=1-1]
	\arrow[""{name=2, anchor=center, inner sep=0}, "{U_{\Md_Q}}"', shift right=2, from=1-2, to=2-2]
	\arrow[""{name=3, anchor=center, inner sep=0}, "{\Cofree_{\Md_Q}}"', shift right=2, from=2-2, to=1-2]
	\arrow["\aug"', from=2-1, to=2-2]
	\arrow["U^{\aug}"', dashed, from=1-1, to=1-2]
	\arrow["\dashv"{anchor=center}, draw=none, from=0, to=1]
	\arrow["\dashv"{anchor=center}, draw=none, from=2, to=3]
\end{tikzcd}\]
the functor $U$ and $U_{\Md_Q}$ both create colimits, ie preserve colimits and are conservative. Hence since $U_{\Md_Q} U^{\aug} \simeq \aug U$, and using the fact that $\aug$ is an equivalence, we see that $U^{\aug}$ also preserves colimits, and further is in fact comonadic (as $U$ and $U_{\Md_Q}$. 

Next we check that this adjunction is relative over $\Coalg_Q$. To do so, we look at the counit 
\[
\epsilon_{\aug}: U^{\aug} \Cofree^{\aug} \to 1_{\Comod_Q}
\]
that $p$ sends it to an equivalence. To do so, note in the map $p U^{\aug} = q$. This is because $p U^{\aug}$ lie over 
\[
\Fun(A, \C)^{\eqTriangle} \to \Fun(A \amalg dA, \C) \to \Fun(A, \C)\]
which exactly only remembers the augmenting object, ie sends $V \to W \to V$ to $V$. 

Since $q$ preserves all limits and colimits. So, when checking if 
\[
p \epsilon_{\aug}: p U^{\aug} \Cofree^{\aug} = q \Cofree^{\aug} \to p
\] is an equivalence, we can check after taking cofree resolutions of comodules. Further we can also apply $U_Q$ to this map as it is a conservative functor.

Hence, it's enough to check whether
\[
U_Q p \epsilon \Cofree_{\Md_Q}: U_Q p U^{\aug} \Cofree^{\aug} \Cofree{\Md_Q} \to U_Q p \Cofree{\Md_Q} 
\]
is an equivalence. This reduces to 
\[
\pi_1 (\aug(Q \circ (V \to V \oplus M \to V)) \to \Md_Q \circ (V, M))
\]
which is $\pi_1$ applied to the colax map for $\aug$. However, its projection $\pi_1$ on the first factor (or on the coaugmenting $Q$-coalgebra) is an equivalence! It is just the identity on $Q \circ V$. Hence we have a relative adjunction, as required.
\end{proof}

Now we have the augmented cofree adjunction:
\begin{corollary}
   Fix a set $A$, $\V \in \CAlg(\Prlst)$, and $\C \in \CAlg(\PrlstV)$ following \ref{notation:AVC2}. Let $Q$ be a cooperad over $\V$. Then we have for every $Q$-coalgebra $C$, an adjunction
\[\begin{tikzcd}
	{\Coalg_Q^{C/-/C}} & {\Comod_C}
	\arrow[""{name=0, anchor=center, inner sep=0}, "{U^{\aug}_C}", shift left=2, from=1-1, to=1-2]
	\arrow[""{name=1, anchor=center, inner sep=0}, "{\Cofree^{\aug}_C}", shift left=2, from=1-2, to=1-1]
	\arrow["\dashv"{anchor=center, rotate=-90}, draw=none, from=0, to=1]
\end{tikzcd}\]
restricting the one from \ref{constr:U_aug}. Further this adjunction is comonadic.
\end{corollary}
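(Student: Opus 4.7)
The plan is to deduce everything by restricting the relative adjunction to fibers and then invoking Barr–Beck–Lurie. First I would invoke the standard theory of relative adjunctions (compare \cite[Section~7.3.2]{HA}): since $U^{\aug} \dashv \Cofree^{\aug}$ is a relative adjunction over $\Coalg_Q$ and $q, p$ are well-defined maps to $\Coalg_Q$, the fiber of $q$ over $C$ is precisely $\Coalg_Q^{C/-/C}$ (those $C \to D \to C$ diagrams whose augmenting object is $C$), the fiber of $p$ over $C$ is $\Comod_C$, and so one automatically obtains the claimed adjunction $U^{\aug}_C \dashv \Cofree^{\aug}_C$ on fibers.

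Next I would verify the hypotheses of the comonadic Barr–Beck–Lurie theorem for $U^{\aug}_C$. Conservativity is immediate: the inclusion $\Comod_C \hookrightarrow \Comod_Q$ is fully faithful and $U^{\aug}_C$ factors as the composite $U^{\aug} \circ \iota$ where $\iota: \Coalg_Q^{C/-/C} \hookrightarrow \Coalg_Q^{\eqTriangle}$ is fully faithful and $U^{\aug}$ is conservative (as a comonadic functor by the preceding proposition). For preservation of $U^{\aug}_C$-split totalizations, I would observe that both $q$ and $p$ preserve all limits: $p$ has both adjoints by the bifibration result, and $q$ is evaluation at a vertex of $\eqTriangle$ and so preserves limits pointwise. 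Since the index category $\Delta$ is weakly contractible, Lemma \ref{lemma:fiber_limit} applies to show that the inclusions $\Coalg_Q^{C/-/C} \hookrightarrow \Coalg_Q^{\eqTriangle}$ and $\Comod_C \hookrightarrow \Comod_Q$ create totalizations lying over the constant diagram at $C$.

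Given a $U^{\aug}_C$-split cosimplicial object $X^\bullet$ in $\Coalg_Q^{C/-/C}$, its image in $\Coalg_Q^{\eqTriangle}$ is $U^{\aug}$-split (split totalizations in $\Comod_C$ remain split after including into $\Comod_Q$). By the comonadicity of $U^{\aug}$ established in the previous proposition, this totalization exists in $\Coalg_Q^{\eqTriangle}$ and is preserved by $U^{\aug}$. Since both sides of $U^{\aug}$ send constant diagrams at $C$ to constant diagrams (and the creation-of-limits result applies), the totalization descends to $\Coalg_Q^{C/-/C}$ and $U^{\aug}_C$ preserves it. This verifies the Barr–Beck–Lurie hypotheses and concludes comonadicity.

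The main obstacle I anticipate is purely bookkeeping: tracking that the several ``fiber over $C$'' constructions (for $q$, for $p$, and for the various forgetful functors to $\Fun(A, \C)$ and $\Fun(A \amalg dA, \C)$) are all compatible, so that the split-totalization argument really does descend to the fibers. Once the limit-creation property of Lemma \ref{lemma:fiber_limit} is in hand and the relative-adjunction formalism is invoked, the rest is a formal consequence.
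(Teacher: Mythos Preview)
Your proposal is correct and follows essentially the same route as the paper: obtain the fiberwise adjunction from the relative adjunction, then deduce comonadicity by using Lemma~\ref{lemma:fiber_limit} (with $q$ and $p$ limit-preserving) to transport the creation of $U^{\aug}$-split totalizations from the global picture to the fibers, together with conservativity inherited from $U^{\aug}$. Your write-up is in fact slightly more careful than the paper's in making explicit the weak contractibility of $\Delta$ and the chain of inclusions through which the split-totalization argument passes.
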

\begin{proof}
    Since we know that $U^{\aug} \dashv \Cofree^{\aug}$ is a relative adjunction over $\Coalg_Q$, this follows from easily taking fibers at the coalgebra $C \in \Coalg_Q$. The comonadicity follows from \ref{lemma:fiber_limit}: we know that $q: \Coalg_Q^{\eqTriangle} \to \Coalg_Q$ preserves all limits. So does $p: \Comod_Q \to \Coalg_Q$. 

    Hence the fiber inclusions $\Coalg_Q^{C/-/C} \to \Coalg_Q^{\eqTriangle}$ and $\Comod_C \to \Comod_Q$ preserve totalizations. Since $U^{\aug}$ creates $U^{\aug}$-split totalizations (as it's comonadic), we see that $U^{\aug}_C$ must also preserve $U^{\aug}_C$-split totalizations! It's also conservative as $U^{\aug}$ is. Hence we're done, by the comonadicity theorem.
\end{proof}

\subsection{Comodules and costability}\label{subsec:costcomod}
We now move to prove a central result about $Q$-cooperadic $C$-comodules for a $Q$-coalgebra $C$. Namely, we show that they are the costabilization of $C$-coaugmented $Q$-coalgebras. This is dual to the classical well known result that $P$-operadic $A$-modules are the stabilization of $A$-augmented $P$-algebras, for example see \cite{FrancisTangent}.

First of all, we fix a field of characteristic zero $k$. We will add to our assumptions that $\V \simeq \Mod_k$, thus $\C$ is a presentably stable $k$-linear category.

First we start with a calculation of pushouts of coalgebras, analogous to the fact that loops of algebras are trivial, for example see \cite[Example~2.4]{CalaqueGrivaux}.
\begin{proposition}\label{prop:coalg_pushout}
    Given a cooperad $Q$
    Let $D \in \Coalg_Q^{C/-/C}$. 
    Then $\Sigma_C D \simeq C \oplus (U_C^{\aug} D)[1]$.
\end{proposition}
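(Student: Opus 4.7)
The plan is to compute $\Sigma_C D$ by first determining its underlying $C$-comodule and then promoting the identification to an equivalence in $\Coalg_Q^{C/-/C}$.

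First, since $U^{\aug}_C : \Coalg_Q^{C/-/C} \to \Comod_C$ is a left adjoint (to $\Cofree^{\aug}_C$), it preserves all colimits; and since $C$ is the zero object of $\Coalg_Q^{C/-/C}$ with $U^{\aug}_C(C) \simeq 0$ in $\Comod_C$, applying $U^{\aug}_C$ to the pushout defining $\Sigma_C D = C \sqcup_D C$ yields
\[
U^{\aug}_C(\Sigma_C D) \simeq 0 \sqcup_{U^{\aug}_C D} 0 \simeq (U^{\aug}_C D)[1]
\]
by stability of $\Comod_C$. On the other hand, $U^{\aug}_C(C \oplus (U^{\aug}_C D)[1]) \simeq (U^{\aug}_C D)[1]$ by \ref{remark:aug_oplus}, which records that $U^{\aug}_C \circ \oplus \simeq \id_{\Comod_C}$. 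So both sides of the claimed equivalence have the same underlying $C$-comodule, and since $U^{\aug}_C$ is conservative (by comonadicity of $U^{\aug}_C \dashv \Cofree^{\aug}_C$), it will suffice to produce a natural comparison map in $\Coalg_Q^{C/-/C}$ that reduces to the above identification after applying $U^{\aug}_C$.

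To construct such a map, I would first verify that $\oplus : \Comod_C \to \Coalg_Q^{C/-/C}$ commutes with suspension. Indeed $\Sigma_C \circ \oplus$ and $\oplus \circ [1]$ are both left adjoints from $\Comod_C$ to $\Coalg_Q^{C/-/C}$, and applying $U^{\aug}_C$ to either produces $M[1]$; by conservativity, $\Sigma_C \oplus M \simeq \oplus(M[1])$ naturally in $M$. Combining this with the universal derivation $\xi_D : \oplus T_{D/C} \to D$ (the counit of $\oplus \dashv T_{-/C}$) and the natural comparison $T_{D/C} \to U^{\aug}_C D$ obtained by applying $U^{\aug}_C$ to $\xi_D$ and using $U^{\aug}_C \oplus = \id$, one assembles a candidate comparison morphism between $\Sigma_C D$ and $\oplus(U^{\aug}_C D[1])$ in $\Coalg_Q^{C/-/C}$.

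The main obstacle is showing this candidate is an equivalence on underlying comodules, which by conservativity of $U^{\aug}_C$ then finishes the proof. Equivalently, one must show that the $T_Q$-coaction on $(U^{\aug}_C D)[1]$ induced by the pushout in the comonadic presentation $\Coalg_Q^{C/-/C} \simeq \LComod_{T_Q}(\Comod_C)$ (for $T_Q := U^{\aug}_C \Cofree^{\aug}_C$) agrees with the trivial square-zero coaction on $\oplus((U^{\aug}_C D)[1])$. In other words, the higher-order non-linear contributions to the coaction of $D$ must become trivial after suspension. This is where the assumption of characteristic zero and the conilpotence of $Q$ become essential: they guarantee that the cofree coalgebra structure decomposes into a linear summand plus higher-weight pieces, and only the linear summand survives in the pushout, yielding the trivial coaction and hence the desired equivalence.
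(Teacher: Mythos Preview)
Your overall strategy matches the paper's: compute $U^{\aug}_C(\Sigma_C D)\simeq (U^{\aug}_C D)[1]$ using that $U^{\aug}_C$ is a colimit-preserving left adjoint, then produce a comparison map in $\Coalg_Q^{C/-/C}$ and invoke conservativity. The difficulty is that the second and third paragraphs do not actually construct such a map.

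Your argument that $\oplus$ commutes with suspension is fine (and simpler than you state: $\oplus$ is a left adjoint sending $0$ to the zero object $C$, so it preserves suspension on the nose; the conservativity detour is unnecessary). But from $\xi_D:\oplus T_{D/C}\to D$ and $T_{D/C}\to U^{\aug}_C D$ you only obtain a \emph{span}
\[
\oplus\bigl((U^{\aug}_C D)[1]\bigr)\;\longleftarrow\;\oplus\bigl(T_{D/C}[1]\bigr)\;\longrightarrow\;\Sigma_C D,
\]
and the left leg has no reason to be invertible. There is no evident natural transformation $\oplus\, U^{\aug}_C\Rightarrow\id$ or $\id\Rightarrow\oplus\, U^{\aug}_C$ on $\Coalg_Q^{C/-/C}$ to which one could apply $\Sigma_C$; this is exactly the missing ingredient. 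Your final paragraph then correctly identifies what must be shown---that the higher-weight pieces of the coaction vanish after suspension---but only asserts it.

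The paper handles both issues at once by working with the cofree cosimplicial resolutions. It forms the levelwise pushout $\Sigma^0_C D$ of the cosimplicial objects, writes its $n$th term as $Q(n)\otimes_{\Sigma_n}\bigl(C^n\oplus\Lin(M,n)[1]\oplus\Mult(M,n)[1]\bigr)$, and compares it with $Q^\bullet\circ\Sigma_C D$, whose $\Mult(M,n,b)$ summands are shifted by $[b]$. The natural comparison $\Sigma^0_C D\to\Sigma_C D$ is zero on the $\Mult$ terms (the map $[1]\to[b]$ coming from suspension is null for $b\ge 2$), so it factors through the quotient killing $\Mult(M,n)[1]$, which is precisely the cosimplicial object of $C\oplus(U^{\aug}_C D)[1]$. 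This \emph{constructs} the comparison map; only then does conservativity finish the argument. The characteristic-zero hypothesis is used to split the $\Sigma_n$-coinvariants so that these summands are genuinely direct summands one can quotient by.
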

\begin{proof}
    To show this, we first notice that it's enough to show this equivalence after forgetting the $C$-augmentation to $\Coalg_Q$, as the forgetful functor creates pushouts. 

    Now we analyze these $Q$-coalgebras as augmented cosimplicial objects. To do so, we denote coalgebras $B$ by 
    \[
    B \to Q^{\bullet} B
    \]
    (here we're identifying $B$ with its underlying $\C$-object).

    Since $D$ is $C$-augmented, its underlying object naturally splits as $C \oplus M$. Then we can take the augmented cosimplicial pushout of $C \leftarrow D \rightarrow C$, we we get a cosimplicial object 
    \[
    C \oplus M[1] \to \bigoplus_n Q(n) \otimes_{\Sigma_n} (C^n \oplus \Lin(M,n)[1] \oplus \Mult(M,n)[1]),
    \]
    where here $\Lin(M,n)$ denotes direct sum of the terms that have $n$ factors of $C, M$ but only a single $M$. On the other hand, $\Mult(M,n)$ denotes the direct sum of the terms $n$-factors of $C, M$ with at least two factors of $M$. This requires us to work over a field of characteristic zero. These are from the calculation of the pushout of $C^n \leftarrow D^n \rightarrow C^n$. We will denote this augmented cosimplicial object as $\Sigma^0_C D$.

    Notice that if we look at the augmented cosimplicial object of $\Sigma_C D$, we get 
    \[
    \Sigma_C D \to Q^{\bullet} \circ \Sigma_C D \simeq \bigoplus_n Q(n) \otimes_{\Sigma_n} (C^n \oplus \Lin(M,n)[1] \oplus \bigoplus^n_{b=2} \Mult(M,n,b)[b])
    \]
    where here $\Mult(M,n,b)$ is are the factors in $(C \oplus M)^n$ with exactly $b$ terms of $M$. 
    
    Notice that the natural comparison map from
    \[
    \Sigma^0_C D \to \Sigma_C D
    \]
    is given as follows: on the constant terms and linear terms 
    \[
    C^n \oplus \Lin(M,n)[1]
    \]
    it is an equivalence. On the higher degree terms in $M$, we have the natural $0$ map from suspension
    \[
    \Mult(M,n,b)[1] \to \Mult(M,n,b)[b].
    \]

    Notice that if we quotient by the $\Mult(M,n)[1]$ terms in $\Sigma^0_C D$, we get exactly the cosimplicial object corresponding to $C \oplus (U^{\aug}_C D[1])$. Indeed this quotient map exactly corresponds to the suspension of the colax morphism of \ref{constr:U_aug}! We're implicitly using also the fact that $C \oplus {-}: \Comod_C \to \Coalg_Q^{C/-/C}$ preserves suspensions, which one can see directly on the augmented cosimplicial objects.
    
    And so the natural map
    \[
    \Sigma^0_C D \to \Sigma_C D
    \]
    factors through this quotient
    \[
    \Sigma^0_C D \to \C \oplus (U^{\aug}_C D[1]) \to \Sigma_C D.
    \]

    However, clearly after taking the forgetful functor $U^{\aug}_C$, we see the morphism $C \oplus (U^{\aug}_C D[1]) \to \Sigma_C D$ gets sent to the identity of $\Sigma_C D \simeq C \oplus M[1]$. Hence since the forgetful functor is conservative, we see that the comparison map 
    \[
    \C \oplus (U^{\aug}_C D[1]) \to \Sigma_C D
    \] is an equivalence, as desired.
\end{proof}

Now we're ready to prove the theorem:
\begin{theorem}
    Fix a set $A$, $\V \in \CAlg(\Prlst)$, and $\C \in \CAlg(\PrlstV)$ following \ref{notation:AVC2}. Let $Q$ be a cooperad and $C$ be a $Q$-coalgebra. Then the functor 
    \[
    \oplus: \Comod_C \to \Coalg_Q^{C/-/C}
    \] defined in \ref{remark:aug_oplus} exhibits $\Comod_C$ as the costabilization of $\Coalg_Q^{C/-/C}$.
\end{theorem}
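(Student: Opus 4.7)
The plan is to verify the universal property of costabilization directly: I will show that for every presentably stable $\D$, postcomposition with $\oplus$ induces an equivalence
\[
\oplus \circ - \colon \Fun^{\mathrm{rex}}(\D, \Comod_C) \longrightarrow \Fun^{\mathrm{rex}}(\D, \Coalg_Q^{C/-/C}),
\]
where $\Fun^{\mathrm{rex}}$ denotes functors preserving finite colimits. This suffices because $\Comod_C$ is already known to be stable, and $\oplus$ is a left adjoint (to $T_{-/C}$) hence preserves all colimits.

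The two key ingredients I intend to combine are (a) the pushout identity $\Sigma_C D \simeq \oplus((U^{\aug}_C D)[1])$ of \ref{prop:coalg_pushout}, and (b) the natural equivalence $U^{\aug}_C \circ \oplus \simeq \id_{\Comod_C}$. Ingredient (b) says that the augmentation-fiber of a square-zero extension recovers the original comodule: by the construction of $\oplus$ in \ref{constr:tangent} as a colax $\Sseq_A$-monoidal refinement of the direct sum, the coalgebra $\oplus(M)$ has underlying object $C \oplus M$, and its restricted coaction on $M$ is exactly the original map $M \to \Md_Q \circ (C, M)$, which is precisely what $U^{\aug}_C$ of \ref{constr:U_aug} extracts from the augmentation triangle.

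To construct the inverse, given $g \colon \D \to \Coalg_Q^{C/-/C}$ preserving finite colimits, I would set $\tilde g := U^{\aug}_C \circ g$. Since $U^{\aug}_C$ is the left adjoint of $\Cofree^{\aug}_C$ it preserves all colimits, so $\tilde g$ preserves finite colimits. To verify $\oplus \tilde g \simeq g$ naturally, fix $X \in \D$: by stability of $\D$ we have $X \simeq \Sigma \Omega X$, and because $g$ preserves pushouts and the zero object, $g(X) \simeq \Sigma_C g(\Omega X)$. Applying (a) to $g(\Omega X)$ gives $g(X) \simeq \oplus((U^{\aug}_C g(\Omega X))[1])$. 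On the other hand, applying $U^{\aug}_C$ to (a) together with (b) yields the natural equivalence $U^{\aug}_C \Sigma_C \simeq [1] \circ U^{\aug}_C$, so $(U^{\aug}_C g(\Omega X))[1] \simeq U^{\aug}_C \Sigma_C g(\Omega X) \simeq U^{\aug}_C g(X)$. Substituting produces a natural equivalence $g(X) \simeq \oplus(U^{\aug}_C g(X)) = \oplus \tilde g(X)$. Essential uniqueness then follows from (b): any $h$ with $\oplus h \simeq g$ satisfies $h \simeq U^{\aug}_C \oplus h \simeq U^{\aug}_C g = \tilde g$, and applying the same reasoning pointwise to natural transformations shows $\oplus \circ -$ is fully faithful on functor categories.

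The main obstacle, as I see it, is input (b): $\oplus$ and $U^{\aug}_C$ are defined via distinct colax $\Sseq_A$-monoidal structures—via the direct-sum functor in \ref{constr:tangent} and via the fiber/cofiber map $\aug$ in \ref{constr:U_aug}—so establishing $U^{\aug}_C \circ \oplus \simeq \id$ with the required coherence amounts to tracing through both constructions and verifying that they agree on square-zero extensions not merely at the level of underlying objects but as $C$-comodules. Once (b) is in hand, everything else is formal manipulation with the pushout formula and the adjunction data.
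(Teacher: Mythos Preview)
Your proposal is correct and follows essentially the same route as the paper. Both arguments rest on the identical two ingredients: (a) the suspension formula $\Sigma_C D \simeq \oplus\bigl((U^{\aug}_C D)[1]\bigr)$ of \ref{prop:coalg_pushout}, and (b) the identification $U^{\aug}_C \circ \oplus \simeq \id_{\Comod_C}$. The only organizational difference is that the paper first factors $\oplus$ through $\CoSt\Coalg_Q^{C/-/C}$ to obtain $\widehat{\oplus}$, defines an explicit inverse $\beta := U^{\aug}_C \circ \Sigma_\infty$, and checks $\beta\widehat{\oplus}\simeq 1$ and $\widehat{\oplus}\beta\simeq 1$; you instead verify the universal property on $\Fun^{\mathrm{rex}}(\D,-)$ directly, with your $\tilde g = U^{\aug}_C \circ g$ playing the role of $\beta$ in the universal case $g = \Sigma_\infty$. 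The obstacle you flag for (b) is real but mild: the paper dispatches it in one line by observing that both colax structures sit over the evident identification $\aug \circ (\text{triangle of }\oplus) \simeq \id$ on $\Fun(A\amalg dA,\C)$, and conservativity of the forgetful functor then lifts this to comodules.
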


\begin{proof}
    We prove it as follows: the colimit-preserving functor \[
    \oplus: \Comod_C \to \Coalg_Q^{C/-/C}
    \]
    factors through the costabilization of $\Coalg_Q^{C/-/C}$:
\[\begin{tikzcd}
	{\Comod_C} && {\Coalg_Q^{C/-/C}} \\
	& {\CoSt \Coalg_Q^{C/-/C}}
	\arrow["\oplus", from=1-1, to=1-3]
	\arrow["\widehat{\oplus}", dotted, from=1-1, to=2-2]
	\arrow["{\Sigma_{\infty}}"', from=2-2, to=1-3]
\end{tikzcd}\]
using the universal property of costabilization. Then there is a composite functor
\[
\beta: 
\begin{tikzcd}
    {\CoSt \Coalg_Q^{C/-/C}} \ar[r, "\Sigma_{\infty}"] & \Coalg_Q^{C/-/C} \ar[r, "U_C^{\aug}"] & \Comod_C
\end{tikzcd}
\]
which is excisive as both functors preserve colimits. We show that $\widehat{\oplus}$ and $\beta$ are inverse functors.

First let's notice that the composite $U_C^{\aug} \oplus$ is the identity of $\Comod_C$, which one can calculate on the underlying undecorated categories (ie on $\Fun(A \amalg dA, \C)$ and $\Fun(A, \C)^{\eqTriangle}$). 

Then notice that $U_C^{\aug} \oplus \simeq \beta \widehat{\oplus}$, by the following diagram:
\[\begin{tikzcd}
	{\Comod_C} && {\Coalg_Q^{C/-/C}} & {\Comod_C} \\
	& {\CoSt \Coalg_Q^{C/-/C}}.
	\arrow["\oplus", from=1-1, to=1-3]
	\arrow["{\widehat{\oplus}}"', dotted, from=1-1, to=2-2]
	\arrow["{\Sigma_{\infty}}"', from=2-2, to=1-3]
	\arrow["{U^{\aug}}"', from=1-3, to=1-4]
\end{tikzcd}\]

Hence we've shown that $\beta \widehat{\oplus} \simeq 1_{\Comod_C}$. 

Next we check the other composition. To do so, notice that it is enough to check that the following diagram commutes
\[\begin{tikzcd}
	{\CoSt \Coalg_Q^{C/-/C}} \\
	{\Coalg_Q^{C/-/C}} & {\Comod_C} & {\Coalg_Q^{C/-/C}}
	\arrow["{\Sigma_{\infty}}", from=1-1, to=2-1]
	\arrow["{U_C^{\aug}}"', from=2-1, to=2-2]
	\arrow["\oplus"', from=2-2, to=2-3]
	\arrow["{\Sigma_{\infty}}", from=1-1, to=2-3]
\end{tikzcd}\]
using the universal property of the costabilization. To show this, we use the proposition \ref{prop:coalg_pushout}.

Given an object of $\CoSt(\Coalg^{C/-/C}_Q)$ which we can think of as a sequence of $Q$-coalgebras $B^\bullet$ such that $\Sigma_C B^{n+1} \simeq B^{n}$. Then notice that $\Sigma_{\infty}$ sends this sequence $B^{\bullet}$ to $B^0$. 

Now we can calculate that since $B^0 \simeq \Sigma_C B^1$, we now that $B^0 \simeq C \oplus U^{\aug}_C B^0$, but this is exactly what we needed to show! In other words, we've shown that 
\[
\Sigma_{\infty} B^{\bullet} \simeq C \oplus U^{\aug}_C \Sigma_{\infty} B^{\bullet},
\]
so we're done.
\end{proof}

\DeclareFieldFormat{labelnumberwidth}{{#1\adddot\midsentence}}
\printbibliography
\end{document}